\newcommand{\N}{\mathbb{N}}
\newcommand{\R}{\mathbb{R}}
\newcommand{\with}{, \quad}
\newcommand{\for}{, \quad \text{ for }}
\newcommand{\fa}{, \quad \text{ for all }}
\newtheorem{theorem}{Theorem}[section]
\newtheorem{definition}[theorem]{Definition}
\newtheorem{lemma}[theorem]{Lemma}
\DeclareMathOperator*{\argmin}{arg\,min}
\title{Detection of Geometric Structures by an Optimally Subsampled Shearlet System in Noisy Digital Images}
\author{Philipp Petersen}
\date{}
\begin{document}
\maketitle

%\listoftodos

\begin{abstract}
We provide a statistical analysis of the ability of digitized continuous shearlet systems to detect objects embedded in white noise. We analyze the possibility to subsample the shearlet system and obtain a system of significantly reduced cardinality that can still yield statistically optimal detection results.
\end{abstract}

%\maketitle

%\tableofcontents

%%%%%%%%%%%%%%%%%%%%%%%
%Introduction
%%%%%%%%%%%%%%%%%%%%%%%

\section{Introduction}\label{sec:introduction}
Detection and classification of singularities in images is an important task in imaging applications. It has been shown in \cite{KLCharacterizationOfEdges2009, GLLEdgeAnalysis2009} that the continuous shearlet transform can be applied to identify location and position of jump singularities of an image. These works have been extended and improved in \cite{KP2014AnaSing} to include compactly supported shearlets, which have optimal spatial localization and can therefore handle singularities better.

All of the above results use a continuous shearlet system i.e. a system constructed from a generator $\psi\in L^2(\R^2)$ together with elements
\begin{align}\label{eq:shearletSystem}
\psi_{a,s,t}(x) = \psi(A_a^{-1} S_s^{-1} (x - t))\with a \in (0,1],\ s \in [-1,1],\ t \in \R^2, 
\end{align}
where 
$$A_a = \begin{pmatrix} a &0\\ 0&\sqrt{a}\\ \end{pmatrix}\text{ and }S_s = \begin{pmatrix} 1&s\\0&1\\ \end{pmatrix}.$$

For applications we need to sample the parameters $a,s,t$ and also restrict the functions $\psi_{a,s,t}$ to a digital lattice or an $n\times n$ pixel image. 

In this work we will examine this digitization from a detection point of view, in the sense, that we aim to describe a suitable digital shearlet system to optimally detect geometrical structures embedded in noise. This is inspired by the work of Arias-Castro, Donoho and Huo \cite{AriDH2005DetectOfGeoObj}, which describes how to construct statistical tests to test for some geometric structures such as lines, intervals or rectangles. There it is assumed that $\xi$ is a discrete signal of length $n\in \N$ with certain characteristics such as jumps and a white noise signal $z(i)$, where $i$ is an element of some discrete index set $I_n$ and $z(i) \sim \mathcal{N}(0,1)$. In the one dimensional situation we choose $I_n: = \{1, \dots,  n\}$. Similarly in higher dimensions we let $I_n := \{i = (i_1, i_2, \dots, i_k): 1\leq i_l \leq n, l= 1,\dots, k\}$. The measured signal is now given by
\begin{align} \label{eq:signalInNoise}
 x(i) = \xi(i) + z(i)\with i \in I_n.
\end{align}
In \cite{AriDH2005DetectOfGeoObj} this model was used to detect discrete lines, rectangles, circles and intervals embedded in white noise. We will continue on this path and examine the optimal detection of structures connected to discontinuities and continuous shearlet systems. Our main result, Theorem \ref{thm:mainTheorem}, states, that careful subsampling of a continuous shearlet system does not influence its abilities to detect certain geometric properties. This subsampling yields a reduction of the size of the system from $O(n^4)$ to $O(n^2 \log(n))$ for an $n\times n$ pixel image.
The manuscript is structured as follows. We will describe the methodology of using statistical tests for the detection of geometrical structures in Subsection \ref{sec:Setup}. Then we will proceed to construct a test to find jump sdiscontinuities in 1d signals in Subsection \ref{sec:1DJumpSing}. From these methods we will extract a general framework in Subsection \ref{sec:generalFramework}. This will then be applied to the continuous shearlet system in Subsection \ref{sec:2DJump}.
\section{Detection of Jump Singularities}
\subsection{Problem Setup}\label{sec:Setup}
We will begin with the detection of jump discontinuities. From the point of view of detection this geometrical structure is closely related to that of intervals and therefore we will first briefly recall the detection of intervals. Using the model \eqref{eq:signalInNoise} with $\xi = A_n \xi_{[a,b]}(i)$, where $\xi_{[a,b]}(i) = \sqrt{b-a}\chi_{[a,b]}$,  $I_n = 1, \dots n$, the authors of \cite{AriDH2005DetectOfGeoObj} showed, that a test can reliably the presence of an interval in an asymptotically stable way, that will be introduced below, from the signal
\begin{align}\label{eq:intervalDet}
x(i) = A_n \xi_{[a,b]}(i) + z(i)\with 1\leq i \leq n,
\end{align}
if $A_n$ lies above a critical threshold. On the other hand if $A_n$ lies below this threshold, no test can reliably detect the interval. The phrase of asymptotically reliable detection needs some mathematical rigor and hence, we repeat the definitions of \textit{asymptotically powerful} and \textit{asymptotically powerless} tests.
\begin{definition}\cite{AriDH2005DetectOfGeoObj}
 In a sequence of testing problems $(H_{0,n})$ vs. $(H_{1,n})$, we say that a sequence of tests $(T_n)$ is \emph{asymptotically powerful} if 
 \begin{align*}
  P_{H_{0,n}}\left\{ T_n \text{ rejects } H_0 \right \} + P_{H_{1,n}}\left\{ T_n \text{ accepts } H_0 \right \} \to 0\with n \to \infty 
 \end{align*}
and the sequence is \emph{asymptotically powerless} if
\begin{align*}
   P_{H_{0,n}}\left\{ T_n \text{ rejects } H_0 \right \} + P_{H_{1,n}}\left\{ T_n \text{ accepts } H_0 \right \} \to 1\with n \to \infty.
\end{align*}
\end{definition}

\subsection{Detecting 1D Jump Singularities}\label{sec:1DJumpSing}
We aim to construct a series of tests that are asymptotically powerful, to detect jump singularities in noisy data. This means, that we obtain data of the form
\begin{align}
x(i) = \xi_J(i) + z(i)\with 1\leq i\leq n, 
\end{align}
where $\xi_J$ exhibits some sort of jump, and $z(i) \sim \mathcal{N}(0,1)$. In the spirit of \cite{AriDH2005DetectOfGeoObj} one must first pose the question under which conditions any test can reliably detect the presence of a jump in the signal.

First of all we need to determine what a jump singularity should be. At this point we may want to consider the prototype of a jump singularity, given by objects of the form $\psi(i) = \chi_{[1, s]}(i), i\in I_n$ which model a jump at the position $s$ from $1$ to $0$. These types of jumps are not very challenging, in fact the detection of these is already covered by the possibility to detect intervals as in \eqref{eq:intervalDet}. Another approach could be to model jumps simply as a point where the function value differs strongly from that of the neighboring points. Taking pairwise differences leads towards the problem of finding one elevated mean within white noise, i.e. \emph{the needle in a haystack problem}, \cite{NeedleInAHaystack1, NeedleInAHaystack2}. In order to be reliably detectable this would require the jumps to be quite high and in fact higher 
than we might want to. In fact, a jump could also be detectable, if it is small but in a neighborhood apart from the jump the function values of the signal change very little. In other words, if before and after the jump we have an almost constant signal, then we would also expect to be able to detect smaller jumps. Indeed, a reasonable model for a jump singularity is that a function has a large Haar wavelet coefficient. This will now be our model.

Let us examine the continuous Haar wavelet transform on $1\leq i \leq n$. Let for $1\leq a \leq n/2$, $1\leq t\leq n$,  
\begin{align}\label{eq:defOfHaar}
\psi_{a,t}(i) = \frac{1}{\sqrt{2a}}(\chi_{[0,a]}(i-t+a) - \chi_{[a,2a]}(i-t+a))\with 1\leq i \leq n, 
\end{align}
where the expression $i-t+a$ is to be understood modulo $n$. We denote by 
$$\mathcal{W}:= \left \{ \psi_{a,t}, 1\leq t \leq n, 1\leq a\leq n/2 \right \}$$ 
the continuous Haar wavelet system. The cardinality of the continuous Haar wavelet system is $n^2/2$. We choose the continuous Haar wavelet system as a model for jumps in the signal i.e. a signal has a jump of size $A$ at $t$, if $|\left \langle \xi, \psi_{a,t} \right \rangle| = A$ for some $1\leq a\leq n/2$.

Of course our jump singularity model is not the only possibility. One could certainly also work with unbalanced jumps, which means that we only need a smooth part with small variation on one side of the jump. Should one take the continuous Haar wavelet transform of white noise, i.e. look at the values $\left \langle \psi_{a,t}, z \right \rangle$, a simple calculation yields that we obtain $n^2/2$ values that are normal distributed. If the values $\left \langle \psi_{a,t}, x \right \rangle$ where to be independent, the needle in a haystack problem suggest, that detection of a jump is only reliably possible if the jump size is larger than $\sqrt{2\log(n^2)}$. However, the probability variables $\left \langle \psi_{a,t}, x \right \rangle$ are not independent and we will show that the optimal threshold for reliable detection is $\sup_{a, t} |\left \langle \psi_{a,t}, x \right \rangle| > \sqrt{2\log(n)}$. This means, that if for some $\eta>0$, $\sup_{a, t} |\left \langle \psi_{a,t}, x \right \rangle| \leq (1-\eta)
\sqrt{2\log(n)}$ 
there does not exist an asymptotically powerful test, but if $\sup_{a, t}| \left \langle \psi_{a,t}, x \right \rangle |\geq (1+\eta)\sqrt{2\log(n)}$ we can construct an asymptotically powerful test.

The key to show that these lower thresholds can be achieved is to subsample the scaling parameter of the continuous Haar wavelet system. For $\epsilon >0$, we choose values $a_i = 2^{\epsilon k}, k= 1, \dots \lceil\frac{1}{\epsilon}(\log(n)-1)\rceil$ and define
\begin{align*}
 \mathcal{W}_\epsilon : = \left \{ \psi_{a_k,t}, k= 1, \dots \lceil\frac{1}{\epsilon}(\log(n)-1)\rceil, 1\leq t \leq n \right \}.
\end{align*}
We use the metric: $\delta(\xi_1, \xi_2) := (1-\left \langle \xi_1, \xi_2\right \rangle)^{\frac{1}{2}}$ to describe the difference between the elements of $\mathcal{W}_\epsilon$ and the continuous Haar wavelet system $\mathcal{W}$. The following Lemma shows, that the subsampled system is in some sense dense with respect to the metric $\delta$.
\begin{lemma}\label{lem:EpsilonNet}
For $\epsilon >0$ the subsampled Haar wavelet system $\mathcal{W}_\epsilon$ is an $\epsilon$-net for the continuous Haar wavelet system $\mathcal{W}$, i.e. 
$$\max_{a, t} \min_{k, s} \delta(\psi_{a_k, s}, \psi_{a, t})< \epsilon.$$
\end{lemma}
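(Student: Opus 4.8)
The plan is to prove the $\epsilon$-net property by showing that for every $\psi_{a,t}\in\mathcal{W}$ it suffices to keep the \emph{same} translate $s=t$ and only replace the scale $a$ by the nearest point $a_k$ of the geometric grid: a scale-only perturbation of a Haar wavelet is cheap in the metric $\delta$. The key step is the exact evaluation of the inner product of two Haar wavelets with a common translate. If $1\le a'\le a$, then $\suppp\psi_{a',t}\subseteq\suppp\psi_{a,t}=[t-a,t+a]$, the wider wavelet $\psi_{a,t}$ is the constant $\tfrac{1}{\sqrt{2a}}$ on $[t-a,t]$ and $-\tfrac{1}{\sqrt{2a}}$ on $[t,t+a]$, and on each of these two half-supports it has the same sign as $\psi_{a',t}$; integrating therefore gives
$$\langle\psi_{a,t},\psi_{a',t}\rangle=\frac{1}{\sqrt{2a}\,\sqrt{2a'}}\,(a'+a')=\sqrt{\tfrac{a'}{a}}=\sqrt{\tfrac{\min(a,a')}{\max(a,a')}},$$
so that $\delta(\psi_{a,t},\psi_{a',t})^2=1-\sqrt{\min(a,a')/\max(a,a')}$ depends only on the ratio of the two scales. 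In the genuinely discrete, $\bmod\,n$ model this identity holds up to an $O(1/a)$ error from the integer truncation of the endpoints and the periodization; I would absorb that into the final estimate, or equivalently carry out the argument with the underlying $L^2(\R)$ functions of which $\psi_{a,t}$ is the sampling.

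Next, fix an arbitrary $(a,t)$ with $1\le a\le n/2$ and choose $k$ so that $a_k=2^{\epsilon k}$ is the grid point closest to $a$ on the logarithmic scale. The consecutive grid points have ratio $2^\epsilon$; by the choice of the top index $k=\lceil\tfrac1\epsilon(\log n-1)\rceil$ the grid reaches $2^{\log n-1}=n/2$, while $a_1=2^\epsilon\ge 1$ covers the lower end, so such a $k$ exists and satisfies $\max(a,a_k)/\min(a,a_k)\le 2^{\epsilon}$. Taking $s=t$ in the identity above then yields
$$\delta(\psi_{a_k,t},\psi_{a,t})^2\;=\;1-\sqrt{\tfrac{\min(a,a_k)}{\max(a,a_k)}}\;\le\;1-2^{-\epsilon/2},$$
and the argument concludes with the elementary bound $1-2^{-\epsilon/2}=1-e^{-(\ln 2/2)\epsilon}\le\tfrac{\ln 2}{2}\,\epsilon<\epsilon$, which is uniform in $(a,t)$ and hence gives $\max_{a,t}\min_{k,s}\delta(\psi_{a_k,s},\psi_{a,t})<\epsilon$.

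I do not expect a deep obstacle here: the proof is essentially the one-line inner-product computation plus a real-variable estimate. The two places that need care are (i) the sign matching on the nested supports — one wants the exact value $\sqrt{\min/\max}$, not merely the Cauchy--Schwarz bound $1$ — and (ii) the end effects (scales near $1$ and near $n/2$, and the $\bmod\,n$ periodization), which only perturb $\langle\psi_{a_k,t},\psi_{a,t}\rangle$ by a lower-order term. The one subtlety worth flagging is that the geometric grid of ratio $2^\epsilon$ only produces $\delta^2=O(\epsilon)$, so reading the conclusion as $\delta<\epsilon$ with the constant exactly $1$ — rather than as "$\delta\to 0$ with $\epsilon$ at a controlled rate", which is what the later arguments actually use — would require a finer grid of ratio $1+O(\epsilon^2)$; this changes the cardinality of $\mathcal{W}_\epsilon$ only by a factor depending on $\epsilon$, so the reduction from $O(n^2)$ to $O(n\log n)$ is unaffected.
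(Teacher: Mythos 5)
Your proof is correct in substance and follows the same skeleton as the paper's: keep the translate fixed, round the scale to the nearest point of the geometric grid, and bound $1-\langle\psi_{a,t},\psi_{a_k,t}\rangle$. Where you differ is in the key estimate. The paper writes $1-\langle\psi_{a,t},\psi_{a_k,t}\rangle = -\langle\psi_{a_k,t}-\psi_{a,t},\psi_{a_k,t}\rangle$, applies Cauchy--Schwarz, and then bounds $\|\psi_{a_k,t}-\psi_{a,t}\|^2$ by $\tfrac{1}{a}|a-a_k| + a_k\bigl|\tfrac{1}{\sqrt{2a}}-\tfrac{1}{\sqrt{2a_k}}\bigr|^2 = O(\epsilon)$; you instead exploit the nesting of supports and the sign matching to compute $\langle\psi_{a,t},\psi_{a_k,t}\rangle = \sqrt{\min(a,a_k)/\max(a,a_k)}$ exactly. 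Your version is sharper and more transparent, and it buys you something important: it shows that $\delta(\psi_{a,t},\psi_{a_k,t})^2 = 1-\sqrt{a_k/a}$ is genuinely of order $\epsilon$ (not $\epsilon^2$) for the grid ratio $2^\epsilon$, so the subsampled system is a $c\sqrt{\epsilon}$-net rather than an $\epsilon$-net in the literal sense of the displayed inequality. The "subtlety" you flag at the end is therefore not just a caveat about your own argument --- it identifies a real defect in the paper's proof, whose final step asserts $|1-\langle\psi_{a,t},\psi_{a_k,t}\rangle|\le\epsilon^2$ even though its own preceding bounds only give $O(\epsilon)$ (and your exact computation shows $\Theta(\epsilon)$ is the truth). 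As you note, this is harmless for everything downstream, since one can simply run the construction with grid ratio $1+O(\epsilon^2)$, changing the cardinality of $\mathcal{W}_\epsilon$ only by an $\epsilon$-dependent factor; but the lemma as stated needs that reparametrization. Your remarks about the discrete/periodized model versus the continuum computation are also apt --- the paper silently treats the sampled wavelets as exactly normalized continuum objects, and your suggestion to absorb the $O(1/a)$ boundary terms (or work with the underlying $L^2(\mathbb{R})$ functions) is the right way to make that rigorous.
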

\begin{proof}
For given $a\leq n/2$, there exists $a_k>0$ such that either $a_k \leq a \leq 2^{\epsilon/2} a_k$ or $2^{\epsilon/2} a_k\leq a \leq a_{k+1}$. Then
$$\delta(\psi_{a,t}, \psi_{a_k,t})^2 = 1- \left \langle \psi_{a,t}, \psi_{a_k,t} \right \rangle,$$
where
$$\left \langle \psi_{a,t}, \psi_{a_k,t} \right \rangle = \left \langle \psi_{a_k,t}, \psi_{a_k,t} \right \rangle + \left \langle \psi_{a_k,t} - \psi_{a,t}, \psi_{a_k,t} \right \rangle = 1 + \left \langle \psi_{a_k,t} - \psi_{a,t}, \psi_{a_k,t} \right \rangle.$$
Furthermore
\begin{align*}
 \left \langle \psi_{a_k,t} - \psi_{a,t}, \psi_{a_k,t} \right \rangle \leq \|\psi_{a_k,t} - \psi_{a,t}\| \| \psi_{a_k,t} \|\leq \|\psi_{a_k,t} - \psi_{a,t}\|.
\end{align*}
Assume $a_k \leq a \leq 2^{\epsilon/2} a_k$, then applying the definition \eqref{eq:defOfHaar} we see that $\|\psi_{a_k,t} - \psi_{a,t}\|^2$ can be estimated by $2 \frac{1}{2a}|a-a_k| + a_k |\frac{1}{\sqrt{2a}}-\frac{1}{\sqrt{2 a_k}}|^2$. Now we have 
$$|a-a_k| \leq a_k (2^{\epsilon/2}-1) < \epsilon/2.$$ 
This yields that 
$$|1-\left \langle \psi_{a,t}, \psi_{a_k,t} \right \rangle |\leq \epsilon^2$$
and hence 
$\delta(\psi_{a,t}, \psi_{a_k,t}) <\epsilon$. 

The case of $2^{\epsilon/2}a_k \leq a \leq a_{k+1}$ follows analogously.
\end{proof}
We denote by
\begin{align*}
 \mathcal{W}(x) = \sup \left\{ |\left \langle \psi_{a,t}, x \right \rangle|,  \psi_{a,t} \in \mathcal{W}) \right \}
\end{align*}
the maximum value of the Haar coefficients of a signal $x$. Similarly
\begin{align*}
 \mathcal{W}_\epsilon(x) = \sup \left\{ |\left \langle \psi_{a,t}, x \right \rangle|,  \psi_{a,t} \in \mathcal{W}_\epsilon) \right \}
\end{align*}
denotes the maximum Haar coefficient of $x$ with respect to the subsampled system $\mathcal{W}_{\epsilon}$.
Using the $\epsilon-$net of Lemma \ref{lem:EpsilonNet} we aim to now construct a reliable test. Indeed we will show, that if the jump size is larger than $\sqrt{2(1+\eta)\log(n)}$ for some positive $\eta$ the \textit{generalized likelihood ratio test} (GLRT), which rejects the null hypothesis if
\begin{align*}
 \mathcal{W}(x) \geq t_{n},
\end{align*}
is asymptotically powerful, if we choose $t_{n}/\sqrt{2 \log (n)} \to 1$.
To prove this, we employ the fact that the subsampled Haar wavelet system yields an $\epsilon$-net. Furthermore, we make use of the $GLRT$ with the reduced dictionary, i.e. we reject the null hypothesis if 
\begin{align}
 \mathcal{W}_\epsilon(x) \geq t_{n}, \label{eq:reducedTest}
\end{align}
where $t_{n}/\sqrt{2\log(n)} \to 1$. The last ingredient for our desired result is the following result describing extreme values of Gaussian processes.
\begin{lemma}\cite{AriDH2005DetectOfGeoObj}\label{lem:MaxOfGaussianVariables}
Let $\omega_1, \dots, \omega_m$ be (possibly dependent) $\mathcal{N}(0,\sigma_i^2)$ variables with all $\sigma_i \leq \sigma$. Then,
\begin{align*}
 P(\max\{\omega_1, \dots, \omega_m\} > \sqrt{2 \log (m)} \sigma \leq \frac{1}{\sqrt{4\pi \log(m)}}.
\end{align*}
\end{lemma}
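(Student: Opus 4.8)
\emph{Proof idea.} The plan is to combine a plain union bound over the $m$ variables with the classical Gaussian tail estimate (Mills' ratio). Since the claimed inequality does not reference the joint law of the $\omega_i$, the possible dependence plays no role whatsoever; only the marginal distributions enter.

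First I would reduce everything to a standard normal. Set $u := \sqrt{2\log m}\,\sigma$, which is positive as soon as $m \geq 2$ (and for $m \geq 2$ the statement is meaningful, the denominator $\sqrt{4\pi\log m}$ being nonzero). Writing $\omega_i = \sigma_i Z_i$ with $Z_i \sim \mathcal{N}(0,1)$, and using $u > 0$ together with $\sigma_i \leq \sigma$, monotonicity of the tail gives
$$P(\omega_i > u) = P\!\left(Z_i > \tfrac{u}{\sigma_i}\right) \leq P\!\left(Z_i > \tfrac{u}{\sigma}\right) = P\!\left(Z_i > \sqrt{2\log m}\right).$$

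Second, I would invoke the bound $P(Z > x) \leq \frac{1}{x\sqrt{2\pi}}\,e^{-x^2/2}$, valid for all $x > 0$; it follows at once from $\int_x^\infty e^{-t^2/2}\,dt \leq \int_x^\infty \frac{t}{x}\,e^{-t^2/2}\,dt = \frac{1}{x}\,e^{-x^2/2}$. Plugging in $x = \sqrt{2\log m}$ yields $e^{-x^2/2} = 1/m$ and $\frac{1}{x\sqrt{2\pi}} = \frac{1}{\sqrt{4\pi\log m}}$, hence $P(\omega_i > u) \leq \frac{1}{m}\cdot\frac{1}{\sqrt{4\pi\log m}}$.

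Finally, the union bound $P\!\left(\max\{\omega_1,\dots,\omega_m\} > u\right) \leq \sum_{i=1}^m P(\omega_i > u)$ closes the argument: summing the $m$ identical estimates cancels the factor $1/m$ and leaves $\frac{1}{\sqrt{4\pi\log m}}$. There is no genuine obstacle here; the only points deserving a moment's attention are that the event is one-sided (no absolute value around the maximum), which is precisely what lets the union bound run with no spurious factor of $2$, and that the step $P(Z > u/\sigma_i) \leq P(Z > u/\sigma)$ needs $u > 0$, i.e. the implicit assumption $m \geq 2$.
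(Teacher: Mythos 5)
Your argument is correct: the union bound over the $m$ marginals combined with the Mills-ratio tail estimate $P(Z>x)\leq \frac{1}{x\sqrt{2\pi}}e^{-x^2/2}$ at $x=\sqrt{2\log m}$ gives exactly the stated bound, and you rightly note that dependence is irrelevant and that $m\geq 2$ is implicitly needed. The paper itself states this lemma without proof, citing \cite{AriDH2005DetectOfGeoObj}, and your derivation is precisely the standard argument used there, so there is nothing to reconcile.
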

Now we will prove that asymptotically the reduced GLRT and the full GLRT behave in the same way. For this, let 
$$\delta_\epsilon(x): = \min_{\psi_{a_k,t} \in \mathcal{W_\epsilon}} | \mathcal{W}(x) - \left \langle \psi_{a_k,t}, x \right \rangle|,$$
denote the effective distance of $\mathcal{W}$ to $\mathcal{W}_\epsilon$. 
\begin{lemma}\label{lem:delta}
 For each $\eta>0$,
 $$P_{H_{0,n}}\left\{ \delta_\epsilon(x) > \sqrt{8 \log(n)}\epsilon \right \} \leq \frac{1}{\sqrt{\log(n)}}.$$
\end{lemma}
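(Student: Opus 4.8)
The plan is to control $\delta_\epsilon(x)$ by comparing an almost-maximiser of $\mathcal{W}$ with its nearest neighbour in the subsampled system $\mathcal{W}_\epsilon$, and then to estimate the resulting error with the Gaussian maximal inequality of Lemma~\ref{lem:MaxOfGaussianVariables}. Concretely, I would first pick $\psi_{a^*,t^*}\in\mathcal{W}$ (essentially) attaining $\mathcal{W}(x)$, and then invoke Lemma~\ref{lem:EpsilonNet} — whose proof keeps the translation parameter fixed — to produce $\psi_{a_k,t^*}\in\mathcal{W}_\epsilon$ with the \emph{same} translation and $\delta(\psi_{a^*,t^*},\psi_{a_k,t^*})<\epsilon$. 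Since the $\psi_{a,t}$ have unit norm, this gives $\|\psi_{a^*,t^*}-\psi_{a_k,t^*}\|^2=2\bigl(1-\langle\psi_{a^*,t^*},\psi_{a_k,t^*}\rangle\bigr)<2\epsilon^2$. Using $\psi_{a_k,t^*}$ as a competitor in the minimum defining $\delta_\epsilon(x)$ then yields
\begin{align*}
\delta_\epsilon(x)\le\bigl|\mathcal{W}(x)-\langle\psi_{a_k,t^*},x\rangle\bigr|=\bigl|\langle\psi_{a^*,t^*}-\psi_{a_k,t^*},x\rangle\bigr|\le\max_{\psi_{a,t}\in\mathcal{W}}\bigl|\langle\psi_{a,t}-\pi_\epsilon(\psi_{a,t}),x\rangle\bigr|,
\end{align*}
where $\pi_\epsilon(\psi_{a,t})$ denotes the nearest element of $\mathcal{W}_\epsilon$ sharing the translation of $\psi_{a,t}$.

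Next I would use that under $H_{0,n}$ one has $x=z$, so each $\langle\psi_{a,t}-\pi_\epsilon(\psi_{a,t}),z\rangle$ is a centred Gaussian whose variance $\|\psi_{a,t}-\pi_\epsilon(\psi_{a,t})\|^2$ is at most $2\epsilon^2$ by the bound above; thus all standard deviations are $\le\sigma:=\sqrt2\,\epsilon$, and there are at most $m:=|\mathcal{W}|=n^2/2$ such variables. Lemma~\ref{lem:MaxOfGaussianVariables} then gives
\begin{align*}
P_{H_{0,n}}\Bigl\{\max_{\psi_{a,t}\in\mathcal{W}}\bigl|\langle\psi_{a,t}-\pi_\epsilon(\psi_{a,t}),z\rangle\bigr|>\sqrt{2\log m}\,\sigma\Bigr\}\le\frac{1}{\sqrt{4\pi\log m}}.
\end{align*}
The proof is then finished by arithmetic: $\sqrt{2\log m}\,\sigma=2\sqrt{\log(n^2/2)}\,\epsilon\le2\sqrt{2\log n}\,\epsilon=\sqrt{8\log n}\,\epsilon$, so that the event $\{\delta_\epsilon(x)>\sqrt{8\log n}\,\epsilon\}$ is contained in the event above; and $\tfrac{1}{\sqrt{4\pi\log m}}\le\tfrac{1}{\sqrt{\log n}}$ because $4\pi\log(n^2/2)\ge\log n$ for all sufficiently large $n$. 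Note how the constant $\sqrt8$ appears exactly as $2\times\sqrt2$, the $2$ coming from $\log(n^2)=2\log n$ and the $\sqrt2$ from $\sigma=\sqrt2\,\epsilon$.

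The one place that needs genuine care — and which I expect to be the main obstacle — is the reduction step. First, the subsampled neighbour must be taken at the same translation as the $\mathcal{W}$-maximiser, so that the number of Gaussians entering the maximal inequality stays proportional to $|\mathcal{W}|$ rather than inflating to a product (or a continuum); this is precisely what the fixed-translation argument in the proof of Lemma~\ref{lem:EpsilonNet} supplies. Second, one has to reconcile the absolute values in the definitions of $\mathcal{W}(x)$ and $\delta_\epsilon(x)$: it is cleanest either to carry $\mathcal{W}(x)$ as the signed supremum $\sup_{a,t}\langle\psi_{a,t},x\rangle$, which has the same law under $H_{0,n}$, or to adjoin $\pm\psi_{a,t}$ to the dictionary, at the cost only of a harmless factor $2$ in $m$. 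Everything downstream is the routine substitution of $m=n^2/2$ and $\sigma=\sqrt2\,\epsilon$ into Lemma~\ref{lem:MaxOfGaussianVariables}.
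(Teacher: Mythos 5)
Your proposal is correct and follows essentially the same route as the paper: bound $\delta_\epsilon(x)$ by $|\langle\psi_{a^*,t^*}-\pi_\epsilon(\psi_{a^*,t^*}),z\rangle|$ for a maximiser $\psi_{a^*,t^*}$ of the full system, then apply Lemma~\ref{lem:MaxOfGaussianVariables} to the resulting family of centred Gaussians. The only difference is bookkeeping: the paper takes $\sigma\le\epsilon$ together with a crude count of at most $n^4$ pairs (so $\sqrt{8}$ arises as $\sqrt{2\cdot 4}$), whereas you take $m=n^2/2$ and, more carefully, $\sigma\le\sqrt{2}\,\epsilon$ --- correctly accounting for the fact that the net is an $\epsilon$-net in the metric $\delta$, which for unit vectors gives $\|\psi_1-\psi_2\|=\sqrt{2}\,\delta(\psi_1,\psi_2)$ --- and both choices land on the same threshold $\sqrt{8\log n}\,\epsilon$.
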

\begin{proof}
First of all we assume that the maximum of the full GLRT is attained at $\psi_{a,t}$. By Lemma \ref{lem:EpsilonNet} we have that there exists $\psi_{a_k,\tilde{t}} \in \mathcal{W}_\epsilon$ such that $\|\psi_{a,t}- \psi_{a_k,\tilde{t}}\|_2 \leq \epsilon$. Then
 \begin{align*}
\delta(x)  \leq  |\langle \psi_{a_k, \tilde{t}} - \psi_{a,t}, x \rangle|=|\sum_{1\leq i\leq n} (\psi_{a_k,\tilde{t}}(i) - \psi_{a,t}(i)) x(i)|.
 \end{align*}
Since we assume that $H_{0,n}$ holds, we have that $x(i) = z(i) \sim N(0,1)$ for all $1\leq i\leq n$. Now $\sum_i(\psi_{a_k,\tilde{t}}(i) - \psi_{a,t}(i)) z(i) \sim \mathcal{N}(0, \|\psi_{a_k,\tilde{t}}(i) - \psi_{a,t}\|^2) = \mathcal{N}(0, \tilde{\epsilon}^2)$ with $\tilde{\epsilon}\leq \epsilon$.
To estimate the probability of $\delta_\epsilon(x)$ exceeding $4\sqrt{\log(n)}\epsilon$ we have to take the maximum of all probability variables $\sum_i(\psi_{a_k,\tilde{t}}(i) - \psi_{a,t}(i)) z(i)$ over all $a, t$ and corresponding $a_k, \tilde{t}$. These are certainly less than $n^4$ variables which are all normal distributed with variance less than $\epsilon^2$. Hence Lemma \ref{lem:MaxOfGaussianVariables} implies that
\begin{align*}
 P_{H_{0,n}}\left\{ \delta_\epsilon(x) > \sqrt{2\log(n^4)}\epsilon \right \} \leq \frac{1}{\sqrt{4\pi\log(n^4)}} \leq \frac{1}{\sqrt{\log(n)}}.
\end{align*}
\end{proof}
Now we are in the position to prove that the GLRT is asymptotically powerful, should the jump size exceed $\sqrt{2(1+\eta)\log(n)}$.
\begin{theorem}\label{thm:AsymptoticallyPowerfulHaar}
 For each $\eta>0$,
 \begin{align*}
  P_{H_0}\left\{ \mathcal{W}(x) > \sqrt{2(1+\eta) \log(n)}\right \} \to 0\for n \to \infty.
 \end{align*}
\end{theorem}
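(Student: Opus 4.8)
The plan is to transfer the known control of the \emph{reduced} statistic to the full statistic via the effective-distance bound of Lemma~\ref{lem:delta}, exploiting that the reduced system $\mathcal{W}_\epsilon$ is only polylogarithmically larger (per translation) than $n$. The starting observation is the elementary inequality
\begin{align*}
\mathcal{W}(x)\le\mathcal{W}_\epsilon(x)+\delta_\epsilon(x),
\end{align*}
valid for every fixed $\epsilon>0$: for each $\psi_{a_k,t}\in\mathcal{W}_\epsilon$ one has $|\mathcal{W}(x)-\langle\psi_{a_k,t},x\rangle|\ge\mathcal{W}(x)-|\langle\psi_{a_k,t},x\rangle|\ge\mathcal{W}(x)-\mathcal{W}_\epsilon(x)$, so taking the minimum over $\mathcal{W}_\epsilon$ yields $\delta_\epsilon(x)\ge\mathcal{W}(x)-\mathcal{W}_\epsilon(x)$.

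First I would fix $\epsilon=\epsilon(\eta)>0$ so small that $\sqrt{1+\eta}-2\epsilon>1$; note that $\epsilon$ depends on $\eta$ alone, not on $n$. Setting $s_n:=\sqrt{2(1+\eta)\log n}-\sqrt{8\log n}\,\epsilon=\sqrt{2\log n}\,(\sqrt{1+\eta}-2\epsilon)$, the inequality above and a union bound give
\begin{align*}
P_{H_{0,n}}\bigl\{\mathcal{W}(x)>\sqrt{2(1+\eta)\log n}\bigr\}\le P_{H_{0,n}}\bigl\{\mathcal{W}_\epsilon(x)>s_n\bigr\}+P_{H_{0,n}}\bigl\{\delta_\epsilon(x)>\sqrt{8\log n}\,\epsilon\bigr\}.
\end{align*}
By Lemma~\ref{lem:delta} the second summand is at most $1/\sqrt{\log n}\to0$.

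For the first summand, under $H_{0,n}$ we have $x=z$, so the numbers $\langle\psi_{a_k,t},z\rangle$ are (dependent) $\mathcal{N}(0,1)$ variables since $\|\psi_{a_k,t}\|=1$, and $\mathcal{W}_\epsilon(x)$ is the maximum of the $m:=2|\mathcal{W}_\epsilon|=2n\lceil\tfrac1\epsilon(\log n-1)\rceil$ variables $\pm\langle\psi_{a_k,t},z\rangle$. With $\epsilon$ fixed, $\log m=\log n+O(\log\log n)=(1+o(1))\log n$, hence $\sqrt{2\log m}=(1+o(1))\sqrt{2\log n}$, whereas $s_n=\sqrt{2\log n}\,(\sqrt{1+\eta}-2\epsilon)$ with $\sqrt{1+\eta}-2\epsilon$ a fixed constant strictly above $1$. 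Thus $s_n\ge\sqrt{2\log m}$ for all sufficiently large $n$, and then Lemma~\ref{lem:MaxOfGaussianVariables} (applied to those $m$ Gaussian variables) yields
\begin{align*}
P_{H_{0,n}}\bigl\{\mathcal{W}_\epsilon(x)>s_n\bigr\}\le P_{H_{0,n}}\bigl\{\mathcal{W}_\epsilon(x)>\sqrt{2\log m}\bigr\}\le\frac{1}{\sqrt{4\pi\log m}}\to0.
\end{align*}
Summing the two bounds proves the theorem.

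The only subtle point is the order of the choices: $\epsilon$ must be fixed as a function of $\eta$ first, so that the multiplicative slack $\sqrt{1+\eta}-2\epsilon$ is a constant exceeding $1$, and only afterwards is $n\to\infty$, which lets the $o(1)$ arising from $\log m/\log n\to1$ be absorbed into that slack. I do not expect any genuinely hard estimate; the entire content is the combination of Lemma~\ref{lem:delta} (effective distance under $H_0$) with Lemma~\ref{lem:MaxOfGaussianVariables} applied to the small system $\mathcal{W}_\epsilon$, together with the normalization $\|\psi_{a_k,t}\|=1$.
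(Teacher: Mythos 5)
Your proof is correct and follows essentially the same route as the paper: bound $\mathcal{W}(x)$ by $\mathcal{W}_\epsilon(x)+\delta_\epsilon(x)$, control the reduced statistic via Lemma~\ref{lem:MaxOfGaussianVariables} using the $O(n\log n)$ cardinality of $\mathcal{W}_\epsilon$, and control $\delta_\epsilon$ via Lemma~\ref{lem:delta}, with the $\log\log n$ surplus absorbed into the $\eta$-slack. Your write-up is in fact slightly more careful than the paper's (explicit order of quantifiers for $\epsilon(\eta)$, and the factor $2$ accounting for the absolute value in the maximum), but the argument is the same.
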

\begin{proof}
 Let us first prove that, for $\epsilon >0$,
 \begin{align*}
  P_{H_0}\left\{ \mathcal{W}_\epsilon(x) > \sqrt{2(1+\eta) \log(n)}\right \} \to 0\for n \to \infty.
 \end{align*}
In fact $\#\mathcal{W}_\epsilon \leq \frac{1}{\epsilon}n (\log (n)-1)$. Since under $H_0$, $\left \langle \psi_{a_k, t}, x \right \rangle \sim \mathcal{N}(0,1)$ for all $\psi_{a_k, t} \in \mathcal{W}_\epsilon$ Lemma \ref{lem:MaxOfGaussianVariables} implies that
\begin{align*}
 P_{H_0}\left\{ \mathcal{W}_\epsilon(x) > \sqrt{2 \log(\frac{1}{\epsilon}n (\log (n)-1))}\right \} &= P_{H_0}\left\{ \mathcal{W}_\epsilon(x) > \sqrt{2 (\log(n) + \log(\log (n)-1)- \log(\epsilon))}  \right \}\\
 &\leq \frac{1}{\sqrt{4\pi\log(\frac{1}{\epsilon}n (\log (n)-1))}}.
\end{align*}
Furthermore we obtain that for any $\eta>0$, there exists $M_\eta>0$ such that for all $m\geq M_\eta$ we have $\eta\log(m) \geq  \log(\log (m)-1)- \log(\epsilon)$. Hence
\begin{align*}
 P_{H_0}\left\{ \mathcal{W}_\epsilon(x) > \sqrt{2(1+\eta) \log(m)}\right \}&\leq P_{H_0}\left\{ \mathcal{W}_\epsilon(x) > \sqrt{2 (\log(n) + \log(\log (n)-1)- \log(\epsilon))}  \right \} \\
 &\leq \frac{1}{\sqrt{4\pi\log(n)}} \to 0\for n\to \infty.
\end{align*}
The result for $\mathcal{W}(x)$ follows by choosing $\epsilon$ such that $\sqrt{2 (1+\eta)} - \sqrt{8}\epsilon\leq \sqrt{2 (1+\eta/2)}$, then Lemma \ref{lem:delta} and the observation that
\begin{align*}
 \mathcal{W}_\epsilon(x) \leq \mathcal{W}(x) \leq \mathcal{W}_\epsilon(x) + \delta_\epsilon(\xi),
\end{align*}
imply the result.
\end{proof}

Of course the question arises weather the bound of Theorem \ref{thm:AsymptoticallyPowerfulHaar} is tight, i.e. if $\sqrt{2(1+\eta)\log(n)}$ is the minimum jump size that can be detected. The following Lemma states that this is indeed the case, i.e. that a lower jump size will not be detectable with an asymptotically powerful test.
\begin{lemma}\label{lem:theStrongLemma}
Given a sequence of hypotheses
$$H_{1,n}^\eta: x(i) = \xi_J(i) + z(i), \quad 1\leq i \leq n,$$
and $\eta >0$ with
\begin{align*}
 \mathcal{W}_\epsilon(\xi_J) = \sqrt{2(1-\eta)\log(n)}.
\end{align*}
Then, no test is asymptotically powerful for testing $H_{0,n}$ against $H_{1,n}^\eta$. 
\end{lemma}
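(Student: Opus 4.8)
The plan is to prove the lower bound by Le Cam's mixture (two‑point) method. It suffices to replace the composite alternative $H_{1,n}^\eta$ by a conveniently chosen sub‑hypothesis, to equip it with a uniform prior, obtaining a single probability measure $\overline{P}_{1,n}$, and to show that the total variation distance between $P_{H_{0,n}}$ and $\overline{P}_{1,n}$ tends to $0$. Since restricting the alternative and averaging over a prior can only make detection harder, this forces, for every sequence of tests, the sum of the type I error and of the worst‑case type II error to be at least $1-o(1)$, and in particular no test can be asymptotically powerful.

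\emph{Choice of the hard instances.} I would work at the finest scale $a_1=2^{\epsilon}$ of $\mathcal{W}_\epsilon$ and pick positions $t_1,\dots,t_m\in\{1,\dots,n\}$ (modulo $n$) spaced by more than $2a_1$, so that the shifts $\psi_{a_1,t_1},\dots,\psi_{a_1,t_m}$ have pairwise disjoint supports; this produces an orthonormal family with $m=\Theta(n)$ members. Setting $A:=\sqrt{2(1-\eta)\log n}$ and $\xi^{(l)}:=A\,\psi_{a_1,t_l}$, the Cauchy--Schwarz inequality together with $\|\psi_{a,t}\|=1$ gives $\mathcal{W}_\epsilon(\xi^{(l)})=A=\sqrt{2(1-\eta)\log n}$, so each $\xi^{(l)}$ is a legitimate alternative under $H_{1,n}^\eta$. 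Let $\overline{P}_{1,n}=\tfrac1m\sum_{l=1}^m P_{\xi^{(l)}}$ be the uniform mixture, where under $P_{\xi^{(l)}}$ one observes $x=\xi^{(l)}+z$.

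\emph{Truncated second moment.} The likelihood ratio is $\overline{L}(x)=\tfrac1m\sum_l e^{A u_l-A^2/2}$ with $u_l:=\langle\psi_{a_1,t_l},x\rangle$, and by orthogonality $u_1,\dots,u_m$ are i.i.d.\ $\mathcal{N}(0,1)$ under $H_{0,n}$. The plain second moment method fails here, since its diagonal part equals $\tfrac1m e^{A^2}=\Theta\!\left(n^{1-2\eta}/\log n\right)$, which diverges for $\eta<1/2$; this is the main obstacle. I would therefore truncate: with $\tau:=\sqrt{2\log m}$, set $\widetilde{L}(x):=\tfrac1m\sum_l e^{Au_l-A^2/2}\mathbf{1}_{\{u_l\le\tau\}}$. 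Exponential tilting gives $E_{H_{0,n}}[\widetilde{L}]=\Phi(\tau-A)$, $E_{H_{0,n}}[\overline{L}-\widetilde{L}]=1-\Phi(\tau-A)$, and, after splitting into diagonal and off‑diagonal terms and using independence,
\[
E_{H_{0,n}}[\widetilde{L}^2]=\tfrac1m e^{A^2}\Phi(\tau-2A)+\tfrac{m-1}{m}\Phi(\tau-A)^2 .
\]
Since $\tau^2-A^2=2\eta\log n+O(1)\to\infty$ we get $\tau-A\to\infty$, hence $E_{H_{0,n}}[\overline{L}-\widetilde{L}]\to0$ and $\Phi(\tau-A)\to1$. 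For the diagonal term write $r:=A^2/(2\log m)\to1-\eta<1$; a Gaussian tail estimate turns $\tfrac1m e^{A^2}\Phi(\tau-2A)$ into $m^{-2(\sqrt r-1)^2+o(1)}$ when $r\ge\tfrac14$ (then $2A\ge\tau$) and into $m^{2r-1}$ when $r<\tfrac14$, so in all cases it tends to $0$. Consequently $E_{H_{0,n}}[\widetilde{L}^2]\to1$ and $E_{H_{0,n}}[(\widetilde{L}-1)^2]=E_{H_{0,n}}[\widetilde{L}^2]-2E_{H_{0,n}}[\widetilde{L}]+1\to0$.

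\emph{Conclusion.} For any test $\phi$ with $0\le\phi\le1$,
\[
E_{\overline{P}_{1,n}}[\phi]-E_{H_{0,n}}[\phi]=E_{H_{0,n}}[(\overline{L}-1)\phi]\le E_{H_{0,n}}[\overline{L}-\widetilde{L}]+\bigl(E_{H_{0,n}}[(\widetilde{L}-1)^2]\bigr)^{1/2}\longrightarrow0,
\]
using $\overline{L}\ge\widetilde{L}\ge0$ and Cauchy--Schwarz. Hence the type I error plus the prior‑averaged type II error of every test is $\ge 1-o(1)$, and a fortiori the same holds with the worst‑case type II error over $H_{1,n}^\eta$, so no sequence of tests is asymptotically powerful. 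I expect two points to require care: the choice of truncation level and the resulting diagonal estimate, whose exponent must come out as $-2(\sqrt r-1)^2<0$ — which works precisely because $r\to1-\eta$ stays bounded away from $1$ — and the combinatorial construction of an orthonormal subfamily of cardinality $\Theta(n)$ inside $\mathcal{W}_\epsilon$ whose members still have $\mathcal{W}_\epsilon$‑coefficient exactly $\sqrt{2(1-\eta)\log n}$. One could instead keep all of $\mathcal{W}_\epsilon$ as the prior support, but then the non‑orthogonality of overlapping Haar wavelets must be controlled in the cross terms; restricting to a single fine scale with disjoint‑support shifts sidesteps this entirely.
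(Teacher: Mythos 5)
Your proof is correct, and its skeleton matches the paper's: both arguments restrict the composite alternative to a sub-family of disjointly supported, single-scale Haar wavelets, which by orthonormality is exactly the sparse normal means ("needle in a haystack") detection problem, and then conclude that below the $\sqrt{2\log n}$ threshold the null and the (sub-)alternative cannot be separated. The difference is in how that last step is handled. The paper performs an explicit change to the orthonormal Haar basis, so that $H_{1,n}'$ becomes $\hat{x}(i)=A_n\delta_t(i)+z(i)$, and then simply \emph{invokes} the known needle-in-a-haystack lower bound from the cited literature; it supplies no quantitative argument (and contains a typo, $\sqrt{2(1-\eta\log n)}$ for $\sqrt{2(1-\eta)\log n}$). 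You instead prove that lower bound from scratch: uniform prior over the $m=\Theta(n)$ hard instances, likelihood ratio $\overline{L}$, and — since the plain second moment blows up for $\eta<1/2$ — a truncated second moment $\widetilde{L}$ at level $\tau=\sqrt{2\log m}$, whose diagonal term you correctly reduce to $m^{-2(\sqrt{r}-1)^2+o(1)}\to 0$ because $r\to 1-\eta<1$. Your verification via Cauchy--Schwarz that $\mathcal{W}_\epsilon(\xi^{(l)})$ equals exactly $\sqrt{2(1-\eta)\log n}$, so the hard instances genuinely lie in $H_{1,n}^\eta$, and your explicit reduction from worst-case to prior-averaged type II error, are both steps the paper leaves implicit. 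In short: same reduction, but you supply a complete, self-contained proof of the key analytic lemma that the paper only cites; the cost is length, the gain is that the argument stands without external references and makes visible exactly where $\eta>0$ is used (in $\tau^2-A^2=2\eta\log n\to\infty$ and in $r$ staying bounded away from $1$). The only points worth polishing are cosmetic: the spacing of the translates should be $2a_1+1$ to guarantee disjoint supports on the integer lattice, and one should note that the discretized $\psi_{a,t}$ at a non-integer scale $a_1=2^{\epsilon}$ is only normalized up to a discretization error (an issue the paper itself glosses over by working at scale $a=1$).
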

\begin{proof}

Let $H_{0,n}$ be the hypothesis that $x(i) \sim \mathcal{N}(0,1)$. Furthermore let $H_{1,n}'$ be the hypothesis that for any $0\leq t\leq n/2-1$ 
\begin{align*}
 x(i) = A_n \psi_{1,2t} + z(i), z_{i} \sim \mathcal{N}(0,1)\with 0\leq i\leq n-1.
\end{align*}
Consider the orthonormal Haar wavelet basis on $1\leq i\leq n$ consisting of 
\begin{align*}
 \{\psi_{1,2t}: t\in [0, n/2]\} \cup \left\{\frac{1}{\sqrt{2}} \chi_{[2t,2t+1]}: t\in [0, n/2]\right\}.
\end{align*}
Indeed, since this yields an orthonormal basis for the signals on $I_n$ we can apply a basis transformation and represent $x$ with respect to the Haar wavelet basis to obtain $\hat{x}$. 
Now $H_{1,n}'$ becomes 
\begin{align*}
 \hat{x}(i) = A_n \delta_t(i) + z(i)\with 0\leq i\leq n.
\end{align*}
The needle in a haystack problem, suggests, that if $A_n = \sqrt{2(1-\eta \log(n))}$ for $\eta>0$, then $H_{0,n}$ and $H_{1,n}'$ merge asymptotically, i.e. there does not exist an asymptotically powerful test to distinguish between the two hypotheses. Since distinguishing $H_{1,n}'$ from $H_{0,n}$ should be easier than distinguishing $H_{1,n}$ from $H_{0,n}$ this proves the claim.
\end{proof}
\section{General Geometrical Structures}
\subsection{General Framework} \label{sec:generalFramework}
In this section we will extract the main ideas of the path we followed in the previous subsection on Haar wavelets to give a theory for arbitrary systems of functions. This can then be employed in any dimension and especially in Section \ref{sec:2DJump} to two dimensional functions. 
We saw in the Haar wavelet case, that subsampling the functions dyadically does not destroy the detectability of jumps by that system. 
Indeed, we will show, that whenever we have set of functions on $I_n$: $(g_j)_{j\in J_n}$ with some index set $J_n$ such that for every $\epsilon>0$ there exist subsystems, that yield an \emph{$\epsilon$-net} the reduced GLRT is asymptotically powerful if the signal has a \emph{detectable feature}:
\begin{definition}
Given a function system $(g_j)_{j\in J_n}$ with a subsystem $(g_j)_{j\in J^{\epsilon}_n}, J^{\epsilon}_n \subset J_n$ on $I_n$. If for each $l\in J_n$ we have that $\min_{j\in J_n^\epsilon}\|g_j-g_l\|<\epsilon$, we say that $(g_j)_{j\in J^{\epsilon}_n}$ \emph{constitutes an $\epsilon$-net for $(g_j)_{j\in J_n}$}.
\end{definition}
\begin{definition}\label{def:detectableFeature}
For each $n\in \N$ let $(g_j)_{j\in J_n}$ be a system of functions on $I_n$ with $\|g_j\|_{\ell^2} = \sum_{i\in I_n} |g_j(i)|^2 = 1$. Furthermore assume that for every $\epsilon$ there exists a subsystem $((g_j)_{j\in J_n^\epsilon})$ hat constitutes an $\epsilon$-net for $(g_j)_{j\in J_n}$. Assume that there exists $e>0$ such that for every $\omega>0$ there exists a constant $C_\omega$ with 
\begin{align*}
|J_n^\epsilon| \leq C_\omega n^{e+\omega}\fa n\in N.
\end{align*}A sequence of signals $\xi^{(n)}$ has a \emph{detectable feature} if $\sup_{j\in J_n}|\left \langle \xi^{(n)}, g_j \right \rangle| \geq (1+\eta)\sqrt{2\log(n^e)}$ for some $\eta>0$ and $n\in \N$.
\end{definition}
The jump singularities from the preceding subsection are detectable features of the continuous Haar wavelet system $\mathcal{W}$. We will study another examples in the sequel which are detectable features of continuous shearlet systems.
Moving on with the general set-up and a set of functions $(g_j)_{j\in J_n}$, we define 
\begin{align*}
 \mathcal{G}_n(x): = \max_{j\in J_n} |\left \langle g_j, x \right \rangle|, 
\end{align*}
and for the subsystems we write
\begin{align*}
 \mathcal{G}_n^\epsilon(x): = \max \limits_{j\in J_n^\epsilon} |\left \langle g_j, x \right \rangle|. 
\end{align*}
 $\mathcal{G}_n(x)$ and $\mathcal{G}_n^\epsilon(x)$ denote the maximum response from a signal, when tested with $\mathcal{G}_n$ or $\mathcal{G}_n^\epsilon$ respectively. Using these we can again build a full and a reduced GLRT by checking, whether $\mathcal{G}_n(x)$ or  $\mathcal{G}_n^\epsilon(x)$ exceed a critical threshold. Furthermore, we aim to obtain an analog of Lemma \ref{lem:delta}, which would tell us that asymptotically we can replace the full GLRT with the reduced GLRT.
With 
$$\delta_\epsilon(x): = \min \limits_{j\in J^\epsilon_n} | \mathcal{G}(x) - \left \langle g_j, x \right \rangle|,$$ 
we obtain
\begin{lemma}\label{lem:delta2}
For each $n\in \N$ let $(g_j)_{j\in J_n}$ be a set of functions on $I_n$. Assume that for every $\epsilon>0$ we have subsystems $((g_j)_{j\in J_n^\epsilon)}$ that yield an $\epsilon$-net for $(g_j)_{j\in J_n}$. Then, we have
$$P_{H_{0,n}}\left\{ \delta_\epsilon(x) > \sqrt{8 \log(n)}\epsilon \right \} \leq \frac{1}{\sqrt{\log(n)}}.$$
\end{lemma}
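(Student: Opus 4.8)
The plan is to mimic the proof of Lemma \ref{lem:delta} essentially verbatim, replacing the concrete Haar system $\mathcal{W}$ by the abstract system $(g_j)_{j\in J_n}$ and using the abstract $\epsilon$-net hypothesis in place of Lemma \ref{lem:EpsilonNet}. First I would fix $n$ and work under $H_{0,n}$, so that $x(i)=z(i)\sim\mathcal{N}(0,1)$ independently. Let $j^*\in J_n$ be an index attaining $\mathcal{G}(x)=|\langle g_{j^*},x\rangle|$ (a maximiser exists since $J_n$ is finite for each fixed $n$). By the $\epsilon$-net assumption there is $j'\in J_n^\epsilon$ with $\|g_{j'}-g_{j^*}\|_{\ell^2}<\epsilon$, and then
\begin{align*}
 \delta_\epsilon(x)\;\le\;\bigl|\mathcal{G}(x)-\langle g_{j'},x\rangle\bigr|\;=\;\bigl|\langle g_{j^*},x\rangle-\langle g_{j'},x\rangle\bigr|\;=\;\bigl|\langle g_{j^*}-g_{j'},x\rangle\bigr|.
\end{align*}

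Next I would control the right-hand side uniformly over the possible pairs $(j^*,j')$. For each fixed pair, $\langle g_{j^*}-g_{j'},z\rangle$ is a centered Gaussian with variance $\|g_{j^*}-g_{j'}\|_{\ell^2}^2\le\epsilon^2$, i.e.\ standard deviation at most $\epsilon$. The quantity $\delta_\epsilon(x)$ is bounded by the maximum of $|\langle g_{j^*}-g_{j'},z\rangle|$ over all such pairs; to apply Lemma \ref{lem:MaxOfGaussianVariables} I also pass to $\pm$ of each variable, which at most doubles the count. The number of pairs is at most $|J_n|\cdot|J_n^\epsilon|$, which we must bound by a polynomial in $n$: using the detectable-feature framework's standing assumption $|J_n^\epsilon|\le C_\omega n^{e+\omega}$ and (after possibly enlarging $e$) a polynomial bound on $|J_n|$ itself — exactly as in Lemma \ref{lem:delta}, where the crude bound $n^4$ was used for the Haar system — the number of Gaussian variables is at most $n^{c}$ for some fixed constant $c$. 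Then Lemma \ref{lem:MaxOfGaussianVariables} with $m\le n^{c}$ and $\sigma\le\epsilon$ gives
\begin{align*}
 P_{H_{0,n}}\Bigl\{\delta_\epsilon(x)>\sqrt{2\log(n^{c})}\,\epsilon\Bigr\}\;\le\;\frac{1}{\sqrt{4\pi\log(n^{c})}}\;\le\;\frac{1}{\sqrt{\log(n)}}
\end{align*}
for $n$ large, and absorbing the constant $\sqrt{c}$ into a crude estimate $\sqrt{2c}\le\sqrt{8}$ (again mirroring Lemma \ref{lem:delta}, which writes the threshold as $\sqrt{8\log(n)}\epsilon$) yields the stated bound $P_{H_{0,n}}\{\delta_\epsilon(x)>\sqrt{8\log(n)}\epsilon\}\le 1/\sqrt{\log(n)}$.

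The only genuinely delicate point is the counting step: the statement as written carries no explicit polynomial bound on $|J_n|$ itself, only on $|J_n^\epsilon|$. I expect this is meant to be read in the context of Definition \ref{def:detectableFeature}, where such systems arise from discretisations on an $n\times n$ grid and hence automatically satisfy $|J_n|\le C n^{c}$ for a fixed $c$ (for the Haar system $|J_n|=n^2/2$; for the shearlet system it is $O(n^4)$). Under that mild, implicitly assumed polynomial bound the argument goes through; if one wanted to be scrupulous one would add the hypothesis $|J_n|\le C_\omega n^{d+\omega}$ to the lemma and replace the constant $8$ by something depending on $d$, but since the paper consistently tolerates such slack (e.g.\ the $n^4$ in Lemma \ref{lem:delta}) the constant $\sqrt{8}$ is a safe uniform choice. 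Everything else — existence of the maximiser, the Cauchy–Schwarz-free identity above, and the variance computation — is routine.
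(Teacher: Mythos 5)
Your proof is correct and follows essentially the same route as the paper, whose own proof of this lemma simply says it is identical to that of Lemma \ref{lem:delta}: pick the maximiser, use the $\epsilon$-net to find a nearby element of the subsystem, observe the difference inner product is Gaussian with standard deviation at most $\epsilon$, and apply Lemma \ref{lem:MaxOfGaussianVariables} to a polynomially bounded family of such variables. You are right that the polynomial bound on $|J_n|$ is nowhere stated explicitly and must be read as an implicit standing assumption (it is what supplies the $n^4$ in Lemma \ref{lem:delta} and hence the constant $\sqrt{8}$); flagging that is a fair and accurate observation, not a flaw in your argument.
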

\begin{proof}
 The proof is the same as that of Lemma \ref{lem:delta}.
\end{proof}

Now we can show, that every detectable feature can, as the name already predicts, be detected. Furthermore it is possible to construct a test using only a reduced GLRT.

\begin{theorem}\label{thm:detectWhatCanBeDetected}
For every $n \in \N$ let $(g_j)_{j\in J_n}$ be a set of normalized functions with subsystems as in Definition \ref{def:detectableFeature} that have cardinality $O(n^{e+\omega})$ for $e>0$ and every $\omega >0$. Let $\xi^{(n)}$ be a sequence of signals that have a detectable feature
and 
$$x^n(i) = \xi^{(n)}(i)+ z(i), \text{ where }z(i) \sim \mathcal{N}(0,1) \text{ and } i\in I_n.$$
Then there exists $\epsilon>0$ such that for all $\eta>0$
$$P\left\{\mathcal{G}^\epsilon(z)>(\sqrt{2(1+\eta) \log(n^e)})\right \} \to 0\for n\to \infty$$
and there exists $\eta>0$ such that
$$ P\left\{\mathcal{G}^\epsilon(x^{(n)})>(\sqrt{2(1+\eta) \log(n^e)})\right \} \to 1\for n\to \infty.$$
\end{theorem}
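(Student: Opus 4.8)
The plan is to mirror the structure of the proof of Theorem \ref{thm:AsymptoticallyPowerfulHaar}, replacing the explicit Haar estimates by the abstract $\epsilon$-net and cardinality hypotheses. First I would prove the null-hypothesis statement, i.e. that $\mathcal{G}^\epsilon(z)$ does not exceed the threshold $\sqrt{2(1+\eta)\log(n^e)}$ with probability tending to $1$. Under $H_{0,n}$, for each $j\in J_n^\epsilon$ the coefficient $\left\langle g_j, z\right\rangle$ is $\mathcal{N}(0,\|g_j\|^2)=\mathcal{N}(0,1)$ since the $g_j$ are normalized. Because $|J_n^\epsilon| \leq C_\omega n^{e+\omega}$ for every $\omega>0$, Lemma \ref{lem:MaxOfGaussianVariables} gives
\begin{align*}
P\left\{\mathcal{G}^\epsilon(z) > \sqrt{2\log(C_\omega n^{e+\omega})}\right\} \leq \frac{1}{\sqrt{4\pi \log(C_\omega n^{e+\omega})}} \to 0.
\end{align*}
Then I would compare exponents: $\log(C_\omega n^{e+\omega}) = (e+\omega)\log n + \log C_\omega$, while the target threshold corresponds to $(1+\eta)e\log n$. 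Choosing $\omega>0$ small enough that $e+\omega < (1+\eta)e$ (possible for any fixed $\eta>0$) and absorbing the constant $\log C_\omega$ for $n$ large, we get $\sqrt{2\log(C_\omega n^{e+\omega})} \leq \sqrt{2(1+\eta)\log(n^e)}$ eventually, which yields the first claim. Note this already works for \emph{every} $\epsilon>0$, so the choice of $\epsilon$ will be dictated entirely by the second part.

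Next I would handle the alternative. By the detectable-feature hypothesis there is $\eta'>0$ with $\mathcal{G}_n(\xi^{(n)}) = \sup_{j\in J_n}|\left\langle \xi^{(n)}, g_j\right\rangle| \geq (1+\eta')\sqrt{2\log(n^e)}$, say attained (or nearly attained) at some $g_l$. Using the $\epsilon$-net property pick $g_{j}\in (g_j)_{j\in J_n^\epsilon}$ with $\|g_j - g_l\| < \epsilon$. Then
\begin{align*}
|\left\langle g_j, x^{(n)}\right\rangle| \geq |\left\langle g_l, \xi^{(n)}\right\rangle| - |\left\langle g_l - g_j, \xi^{(n)}\right\rangle| - |\left\langle g_j, z\right\rangle|.
\end{align*}
The middle term is bounded by $\epsilon\,\|\xi^{(n)}\|$ by Cauchy--Schwarz; for the last term, $\left\langle g_j, z\right\rangle\sim\mathcal{N}(0,1)$ so it is $O_P(1)$, hence negligible compared to $\sqrt{\log n}$. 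The required size-controlling bound on $\|\xi^{(n)}\|$ is essentially automatic in the intended applications (the signal is a fixed scalar multiple of a unit-norm dictionary element, so $\|\xi^{(n)}\|$ is exactly the coefficient magnitude up to a constant, hence $O(\sqrt{\log n})$). Assuming $\|\xi^{(n)}\| \leq D\sqrt{2\log(n^e)}$ for some constant $D$, choose $\epsilon>0$ with $D\epsilon < \eta'/2$; then for $n$ large
\begin{align*}
\mathcal{G}^\epsilon(x^{(n)}) \geq (1+\eta')\sqrt{2\log(n^e)} - \epsilon D\sqrt{2\log(n^e)} - O_P(1) \geq \left(1+\tfrac{\eta'}{2}\right)\sqrt{2\log(n^e)}
\end{align*}
with probability tending to $1$, which is the second claim with $\eta = \eta'/2$.

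The main obstacle, and the point that needs the most care, is the control of $\|\xi^{(n)}\|$ entering the Cauchy--Schwarz step: the $\epsilon$-net is an $\ell^2$-net for the dictionary atoms, so the perturbation error $|\left\langle g_l - g_j, \xi^{(n)}\right\rangle|$ scales with $\|\xi^{(n)}\|$, and if the signal had norm growing faster than $\sqrt{\log n}$ one could not simply trade $\epsilon$ against $\eta'$ with a single constant. I would therefore either fold a mild uniform bound $\|\xi^{(n)}\| = O(\sqrt{\log n})$ into the definition of ``detectable feature'' (it is satisfied in all the cases of interest, including the Haar and shearlet examples where $\xi^{(n)}$ is a bounded multiple of a normalized atom), or state it explicitly as a hypothesis of the theorem; the rest of the argument is then a routine combination of Lemma \ref{lem:MaxOfGaussianVariables}, the $\epsilon$-net, and the exponent comparison above, exactly parallel to Theorem \ref{thm:AsymptoticallyPowerfulHaar}. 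A secondary subtlety is making sure the order of quantifiers in the statement is honoured: $\epsilon$ must be chosen \emph{before} $\eta$ in the first claim but is pinned down by $\eta'$ in the second, so I would fix $\epsilon$ from the second part and observe that the first part holds for that (indeed any) $\epsilon$.
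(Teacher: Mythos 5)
Your proposal is correct and follows essentially the same route as the paper: the null-hypothesis part is word-for-word the paper's argument (Lemma \ref{lem:MaxOfGaussianVariables} applied to the $O(n^{e+\omega})$ net plus the exponent comparison $\eta\log(n^e)\geq\omega\log(n^e)-\log C_\omega$), and the alternative is the same triangle-inequality transfer from the full to the reduced dictionary via the $\epsilon$-net, which the paper packages through $\delta_\epsilon(x^{(n)})$ and Lemma \ref{lem:delta2} rather than bounding the net coefficient directly. The one place you diverge is actually an improvement: the paper estimates $|\langle g_{\tilde{j}}-g_j,\xi^{(n)}\rangle|\leq\epsilon$ with no norm factor, which cannot be right as written since Cauchy--Schwarz only gives $\epsilon\|\xi^{(n)}\|$ and the detectable-feature condition forces $\|\xi^{(n)}\|\geq(1+\eta)\sqrt{2\log(n^e)}$; your explicit hypothesis $\|\xi^{(n)}\|=O(\sqrt{\log n})$ together with choosing $\epsilon$ after $\eta'$ is exactly what is needed to make the trade-off legitimate, and your remark on the quantifier order ($\epsilon$ pinned down by the second part, the first part holding for every $\epsilon$) matches the paper's intent.
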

\begin{proof}
The first part of the proof uses similar arguments as the proof of Theorem \ref{thm:AsymptoticallyPowerfulHaar}
For $\epsilon, \eta>0$, there exists $\omega < \eta$ and $C_\omega$ such that $\#\mathcal{G}_\epsilon \leq C_\omega n^{e+\omega}$.
Since the $g_j$ are normalized we obtain that $\left \langle g_j, z \right \rangle \sim \mathcal{N}(0,1)$. Lemma \ref{lem:MaxOfGaussianVariables} implies
\begin{align*}
 P\left\{ \mathcal{W}_\epsilon(z) > \sqrt{2 \log(C_\omega n^{e+\omega})}\right \} &= P\left\{ \mathcal{W}_\epsilon(z) > \sqrt{2 (1+\omega)\log(n^e) - \log(C_\omega))}  \right \}\\
 &\leq \frac{1}{\sqrt{4\pi\log(C_\omega n^{e+\omega})}}.
\end{align*}
Furthermore we obtain that there exists $M_\eta>0$ such that for $n\geq M_\eta$ we have $\eta\log(n^e) \geq  \omega\log(n^e) - \log(C_\omega)$. Hence
\begin{align*}
 P\left\{ \mathcal{W}_\epsilon(z) > \sqrt{2(1+\eta) \log(n^e)}\right \}&\leq P\left\{ \mathcal{W}_\epsilon(z) > \sqrt{2 (1+\omega)\log(n^e) - \log(C_\omega))}  \right \} \\
 &\leq \frac{1}{\sqrt{4\pi\log(n^e)}} \to 0.
\end{align*}
For the second part we first prove that there exists some $\eta>0$ such that
\begin{align}\label{eq:theProbForW}
P\left\{\mathcal{G}(x^{(n)})>(\sqrt{2(1+\eta) \log(n)})\right \} \to 1\for n\to \infty. 
\end{align}
By the assumption that $x$ has a detectable feature we obtain that there exists some $\gamma$ such that $\mathcal{G}(x^{(n)}) \sim \mathcal{N}(\sqrt{2(1+\gamma)\log(n^e)} , 1)$. Choosing $\eta < \gamma$ less than $\gamma$ yields that \eqref{eq:theProbForW} is equal to
\begin{align*}
 P\left\{\mathcal{N}(\sqrt{2(1+\gamma)\log(n^e)} , 1) > \sqrt{2(1+\eta) \log(n)}\right \} =  P\left\{\mathcal{N}(0 , 1) > -\beta \sqrt{\log n} \right \} \leq n^{-\beta^2/2} \to 0,
\end{align*}
where the last equation follows by Mills ratio and $\beta = \sqrt{2(1+\eta)}-\sqrt{2(1+\gamma)}$. To obtain that 
$$P\left\{\mathcal{G}^\epsilon(x^{(n)})>(\sqrt{2(1+\eta) \log(n)})\right \}$$
we observe that
\begin{align*}
\delta_\epsilon(x^{(n)}) &= \min \limits_{j\in J^\epsilon_n} | \mathcal{G}(x^{(n)}) - \left \langle g_j, x^{(n)} \right \rangle| = \min \limits_{j\in J^\epsilon_n} | \mathcal{G}(\xi^{(n)}) - \left \langle g_j, \xi^{(n)} \right \rangle +  \mathcal{G}(z) - \left \langle g_j, z \right \rangle|\\
&\leq \min \limits_{j\in J^\epsilon_n} (| \mathcal{G}(\xi^{(n)}) - \left \langle g_j, \xi^{(n)} \right \rangle |+ | \mathcal{G}(z) - \left \langle g_j, z \right \rangle|).
\end{align*}
Let $g_{\tilde{j}} \in \mathcal{G}$ be a function where $| \langle g_{\tilde{j}}, \xi^{(n)} \rangle| =  \mathcal{G}(\xi^{(n)})$, then
\begin{align*}
 \min \limits_{j\in J^\epsilon_n} (| \mathcal{G}(\xi^{(n)}) - \left \langle g_j, \xi^{(n)} \right \rangle |+ | \mathcal{G}(z) - \left \langle g_j, z \right \rangle|) \leq&  |\left \langle g_{\tilde{j}} -  g_j, \xi^{(n)} \right \rangle| + |\left \langle g_{\tilde{j}} -  g_j, z \right \rangle|\\
 \leq& \epsilon +  |\left \langle g_{\tilde{j}} -  g_j, z \right \rangle|. 
\end{align*}
The rest follows by Lemma \ref{lem:delta2}.
\end{proof}
% \begin{lemma}\label{lem:delta3}
% Let for each $n\in \N$ a function system $(g_j)_{j\in J_n}$ of functions on $I_n$. Furthermore assume that for every $\epsilon>0$ we have subsystems $((g_j)_{j\in J_n^\epsilon)}$ that yield an $\epsilon$-net for $(g_j)_{j\in J_n}$.
% $$P\left\{ \delta_\epsilon(x) > (1+\sqrt{8 \log(n)})\epsilon \right \} \leq \frac{1}{\sqrt{\log(n)}}$$
% \end{lemma}
% \begin{proof}
% We use $x = \xi + z$ to obtain
% \begin{align*}
% \delta_\epsilon(x) &= \min \limits_{j\in J^\epsilon_n} | \mathcal{G}(x) - \left \langle g_j, x \right \rangle| = \min \limits_{j\in J^\epsilon_n} | \mathcal{G}(\xi) - \left \langle g_j, \xi \right \rangle +  \mathcal{G}(z) - \left \langle g_j, z \right \rangle|\\
% &\leq \min \limits_{j\in J^\epsilon_n} (| \mathcal{G}(\xi) - \left \langle g_j, \xi \right \rangle |+ | \mathcal{G}(z) - \left \langle g_j, z \right \rangle|).
% \end{align*}
% Let $g_{\tilde{j}} \in \mathcal{G}$ be the function where the value $\mathcal{G}(\xi)$ is attained, then
% \begin{align*}
%  \min \limits_{j\in J^\epsilon_n} (| \mathcal{G}(\xi) - \left \langle g_j, \xi \right \rangle |+ | \mathcal{G}(z) - \left \langle g_j, z \right \rangle|) \leq  |\left \langle g_{\tilde{j}} -  g_j, \xi \right \rangle| + |\left \langle g_{\tilde{j}} -  g_j, z \right \rangle|\leq \epsilon +  |\left \langle g_{\tilde{j}} -  g_j, z \right \rangle|. 
% \end{align*}
% The rest follows by Lemma \ref{lem:delta2}.
% \end{proof}
Let us reflect shortly on what we proved so far. If a signal has a detectable structure, i.e., a structure that can be detected by a set of functions, we can subsample this set to obtain a cheaper test. We saw analyzed how this can be applied for Haar wavelets. Jump singularities are detectable by the continuous Haar wavelet system as we observed in Subsection \ref{sec:1DJumpSing}. Furthermore the continuous Haar wavelet system can be subsampled to yield $\epsilon$-nets as shown in Lemma \ref{lem:EpsilonNet}. This smaller set allows for the construction of a fast test by computing all the scalar products of the subsampled Haar wavelet coefficients with the signal. In fact this can be done in $O(n\log(n))$, which we see by observing, that $\left \langle \psi_{a_k,t}, x \right \rangle = \psi_{a, 0} * x(t) = \mathrm{ifft}(\mathrm{fft}(\psi_{a, 0}) \mathrm{fft}(x))$, where $\mathrm{fft}$ and $\mathrm{ifft}$ are the discrete fast Fourier transform and its inverse.
Since we have only $O(\log(n))$ scaling parameters, this yields the claimed computational complexity of $O(n\log(n))$. Since this machinery seems quite powerful we can now advance to higher dimensional structures namely those described by continuous shearlet systems.

\subsection{Detecting 2D Jump Singularities}\label{sec:2DJump}
From the work of \cite{KP2014AnaSing} is is clear, that continuous shearlet systems are well versed to detect geometric structures like discontinuities along smooth curves. By virtue of Theorem \ref{thm:detectWhatCanBeDetected} we need to show, that jump singularities are a detectable feature of a continuous shearlet system and that subsampling the shearlet system yields a subsystem, that constitutes an $\epsilon$-net.

Let us start by defining the continuous digital shearlet transform on a $n\times n$ pixel image. As the digital realm should be a discretization of the continuum realm $L^{2}([0,1]^2)$ this should be reflected in the construction.
Let $\psi$ be a continuous shearlet supported in $[0,1]^2$. We define the continuous digital shearlet system as digitization of the continuous system by defining for an $n\times n$ pixel image the continuous digital shearlet system $\mathcal{CDSH}$ as
\begin{align*}
 \mathcal{CDSH}: = \left\{\psi_{a,s,t}, a = 1, \dots n, s = -\frac{n}{2}, \dots, \frac{n}{2}, t= (t_1,t_2), 1\leq t_i \leq n \right \}, 
\end{align*}
where 
\begin{align*}
 \psi_{a,s,t}(i) = n^\frac{5}{4} \int_{x \in Q_{\frac{1}{2n}(i/n)}}a^{-\frac{3}{4}} \psi (A_\frac{a}{n}^{-1} S_\frac{s}{n}^{-1} (x-t/n)) dx.
\end{align*}
In the equation above $Q_{q}(r)$ denotes the cube of sidelength $q$ and center $r$.
As a second step we define subsystems with significantly lower complexity that still yield $\epsilon-$nets.
With $\lceil t \rceil = (\lceil t_1\rceil, \lceil t_2 \rceil)$, we define for $\eta, \omega, \nu>0$,
\begin{align*}
\mathcal{CDSH}^{\delta, \omega, \nu}: = \left\{\psi_{\lceil a_k \rceil ,\lceil s \rceil,\lceil t \rceil},\ a_k = 2^{\delta k},\ s = \omega \sqrt{a_k n}  l,\ t = (t_1,t_2),\ t_1 = \nu a_k r_1,\ t_2 = \nu \sqrt{a_k n} r_2, \right.\\
k=0, \dots, \left\lceil\frac{1}{\delta}(\log(n)-1)\right\rceil,\ l = -\left\lfloor \frac{\sqrt{n}}{\sqrt{a_k}\omega}\right\rfloor, \dots, \left\lfloor \frac{\sqrt{n}}{\sqrt{a_k}\omega}\right\rfloor,\\
\left. r_1 = 0,\dots,\frac{n}{a_k \nu},\  ,\ r_2 = 0, \dots, \frac{\sqrt{n}}{\sqrt{a_k} \nu}\right\}.
\end{align*}
Observe that the cardinality of $\mathcal{CDSH}^{\delta, \omega, \nu}$ is of order $n^{2}(\log n)$, which compared to the colossal $O(n^4)$ of the $\mathcal{CDSH}$ can be seen as a significant reduction. In order to apply Theorem \ref{thm:detectWhatCanBeDetected} we still need to see that the new system constitutes an $\epsilon$-net.
\begin{lemma}\label{lem:ShearletsEpsilonNets}
Let $\psi$ be a Lipschitz continuous shearlet with Lipschitz constant $C_\psi$. Then for $\delta \leq 1$, $\mathcal{CDSH}^{\delta, \omega, \nu}$ is an $\epsilon$-net for $\mathcal{CDSH}$ with $\epsilon = 2\delta/3 + C_\psi  3\delta + \omega  + 3C_\psi\nu$.
\end{lemma}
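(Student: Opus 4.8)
The plan is to fix an arbitrary element $\psi_{a,s,t}\in\mathcal{CDSH}$ and exhibit an element $\psi_{\lceil a_k\rceil,\lceil s\rceil,\lceil t\rceil}\in\mathcal{CDSH}^{\delta,\omega,\nu}$ whose $\ell^2$-distance to it is at most $\epsilon = 2\delta/3 + 3C_\psi\delta + \omega + 3C_\psi\nu$. As in Lemma \ref{lem:EpsilonNet}, the natural strategy is a telescoping (triangle-inequality) decomposition of the error into contributions from each discretized parameter separately: first replace $a$ by the nearest admissible scale $a_k = 2^{\delta k}$ (and further round to $\lceil a_k\rceil$), then replace $s$ by the nearest admissible shear $\lceil s\rceil$ on the grid $\omega\sqrt{a_k n}\,\Z$, then replace $t=(t_1,t_2)$ by the nearest admissible translate $\lceil t\rceil$ on the anisotropic grid $\nu a_k\Z\times\nu\sqrt{a_k n}\,\Z$. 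Each of the three terms, plus the rounding-to-integer terms, will be estimated by the Lipschitz bound on $\psi$ together with a change-of-variables argument that accounts for the $\ell^2$-to-$L^2$ normalization $n^{5/4}$ and the cube-averaging in the definition of $\psi_{a,s,t}(i)$.

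Concretely, I would first pass from the discrete sum $\sum_{i}|\psi_{a,s,t}(i)-\psi_{a',s',t'}(i)|^2$ to the continuous integral $\int_{[0,1]^2}|a^{-3/4}\psi(A_{a/n}^{-1}S_{s/n}^{-1}(x-t/n)) - (a')^{-3/4}\psi(A_{a'/n}^{-1}S_{s'/n}^{-1}(x-t'/n))|^2\,dx$ rescaled appropriately; the factor $n^{5/4}$ and the $Q_{1/(2n)}$-averaging are calibrated precisely so that this passage is an identity up to the Jensen-type bound on averaging a Lipschitz function over a cube of sidelength $1/n$. After the substitution $y = A_{a/n}^{-1}S_{s/n}^{-1}(x-t/n)$ the comparison becomes $\|\psi - \psi(A_{a/n}^{-1}S_{s/n}^{-1}A_{a'/n}S_{s'/n}(\cdot) + c)\|_{L^2}$ for a shift $c$ depending on $t-t'$, and the affine map $A_{a/n}^{-1}S_{s/n}^{-1}A_{a'/n}S_{s'/n}$ is close to the identity exactly when $a'/a$ is close to $1$ (governed by $2^\delta-1\le \delta$ for $\delta\le 1$, hence the $2\delta/3$ and $3C_\psi\delta$ contributions) and when $(s-s')/\sqrt{a n}$ is small (governed by the shear grid spacing $\omega\sqrt{a_k n}$, giving the $\omega$ term). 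The translation rounding contributes $3C_\psi\nu$ because $|t_1-t_1'|\le \nu a_k$ is measured in the $x_1$-direction scaled by $(a/n)^{-1}$, giving a displacement of order $\nu$, and similarly $|t_2-t_2'|\le \nu\sqrt{a_k n}$ is scaled by $(\sqrt{a/n})^{-1}$, again order $\nu$; since $\suppp\psi\subset[0,1]^2$, the Lipschitz bound $|\psi(y)-\psi(y+v)|\le C_\psi|v|$ integrated over the support yields the claimed constant.

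The main obstacle I anticipate is bookkeeping the normalization correctly: verifying that $n^{5/4}$, the exponent $-3/4$ on $a$, and the cube sidelength $1/(2n)$ combine so that $\|\psi_{a,s,t}\|_{\ell^2}\approx 1$ and that the $\ell^2$-distance between two digital shearlets is genuinely bounded by the $L^2$-distance of their continuous counterparts up to the averaging error, which itself must be absorbed into the $\delta$-terms. A secondary technical point is that one must round $a_k$, $s$, and $t$ to integers (the digital system only admits integer indices), so the "nearest admissible parameter" differs from the nearest grid point by an additional $O(1)$ in index, i.e. $O(1/n)$, $O(\sqrt{a n}/n)$, $O(a/n)$ in the rescaled variables — these are lower-order and get folded into the constants, but they should be checked. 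Once the three-term split and these normalization identities are in place, each piece is a direct application of the Lipschitz estimate and the substitution rule, and summing the bounds gives exactly $\epsilon = 2\delta/3 + 3C_\psi\delta + \omega + 3C_\psi\nu$.
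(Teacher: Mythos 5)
Your proposal is correct and follows essentially the same route as the paper: reduce to the nearest grid parameters, pass to the continuous integral via the cube-averaging, change variables so the comparison becomes $\psi$ against $\psi$ composed with a near-identity affine map, and then bound the amplitude, shear/scale-distortion, and translation contributions separately by the Lipschitz estimate, which is exactly how the paper obtains the three terms $2\delta/3 + 3C_\psi\delta + \omega$ and $3C_\psi\nu$. The only cosmetic difference is that you telescope at the level of norms (one parameter at a time) while the paper performs a single substitution and splits the integrand pointwise into $I_1+I_2+I_3$; the estimates and constants are the same either way.
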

\begin{proof}
 We compute
 \begin{align*}
  \|\psi_{a,s,t} - \psi_{a_k,\tilde{s},\tilde{t}}\|,
 \end{align*}
where we need to choose $k$, $\tilde{s}$ and $\tilde{t}$ in a suitable way. Without loss of generality, we can assume $t = 0$, since the above expression is invariant under translations. Furthermore, we choose 
\begin{align*}
 a_k = \argmin_k |\lceil a_k \rceil - a|,\quad \tilde{s} = \argmin \limits_{\hat{s}=-\frac{n}{2}:\omega \sqrt{a_k n}:\frac{n}{2}} |s-\lceil\hat{s}\rceil|.
\end{align*}
Assume for simplicity that $a_k<a$. Then we obtain the error estimates
\begin{align*}
  |\lceil a_k \rceil - a| \leq a_k (2^{\delta}-1) ,\quad |s-\lceil\tilde{s}\rceil| \leq \omega \sqrt{a_k n}/2.
\end{align*}
Moreover,choose $\tilde{t} = (\tilde{t}_1, \tilde{t}_2)$ such that
\begin{align*}
 |\tilde{t}_1| \leq  \frac{\nu}{2} a_k, \quad |\tilde{t}_2| \leq \frac{\nu}{2}\sqrt{a_k n}.
\end{align*}
Then we obtain 
\begin{align*}
 &\|\psi_{a,s,0} - \psi_{a_k,\tilde{s},\tilde{t}}\|^2\\
 &\quad= \sum \limits_{i\in I_n}\left| \psi_{a,s,0}(i) - \psi_{a_k,\tilde{s},\tilde{t}}(i)\right|^2\\
 &\quad=\sum \limits_{i\in I_n}\left|n^\frac{5}{4} \int_{x \in Q_{\frac{1}{2n}(i/n)}}a^{-\frac{3}{4}} \psi (A_\frac{a}{n}^{-1} S_\frac{s}{n}^{-1} x) - \lceil a_k\rceil^{\frac{3}{4}}\psi (A_\frac{\lceil a_k\rceil}{n}^{-1} S_\frac{\lceil \tilde{s}\rceil}{n}^{-1} (x-\tilde{t}/n))dx \right|^2\\
  &\quad=\sum \limits_{i\in I_n}\left|n^\frac{5}{4} \int_{x \in Q_{\frac{1}{2n}(i/n)}}a^{-\frac{3}{4}} \psi (A_\frac{a}{n}^{-1} S_\frac{s}{n}^{-1} x) - \lceil a_k\rceil^{-\frac{3}{4}}\psi (A_\frac{\lceil a_k\rceil}{n}^{-1} S_\frac{\lceil \tilde{s}\rceil}{n}^{-1} x-A_\frac{\lceil a_k\rceil}{n}^{-1} S_\frac{\lceil \tilde{s}\rceil}{n}^{-1}\tilde{t}/n) dx \right|^2.
\end{align*}
Now we apply the transformation $x \mapsto S_\frac{\lceil \tilde{s}\rceil}{n} A_\frac{\lceil a_k\rceil}{n}x$ to the last term above. We obtain that
\begin{align*}
\|\psi_{a,s,0} - \psi_{a_k,\tilde{s},\tilde{t}}\|^2 = \sum \limits_{i\in I_n}|n^{-\frac{1}{2}} \int_{D_i} \psi( x ) - \left(\frac{a}{\lceil a_k\rceil}\right)^{\frac{3}{4}}\psi (A_\frac{\lceil a_k\rceil}{n}^{-1} S_{\frac{s-\lceil \tilde{s}\rceil}{n}} A_\frac{a}{n}x - A_\frac{\lceil a_k\rceil}{n}^{-1} S_\frac{\lceil \tilde{s}\rceil}{n}^{-1}\tilde{t}/n) dx |^2,
\end{align*}
where $D_i = \left(S_\frac{\lceil \tilde{s}\rceil}{n} A_\frac{\lceil a_k\rceil}{n}\right)^{-1} Q_{\frac{1}{2n}(i/n)}$.
Since $|D_i|  \leq \frac{1}{\sqrt{n}}$ we only need to show that for $x\in D_i$
\begin{align*}
 |\psi( x ) - \left(\frac{a}{\lceil a_k\rceil}\right)^{\frac{3}{4}}\psi (A_\frac{\lceil a_k\rceil}{n}^{-1} S_{\frac{s-\lceil \tilde{s}\rceil}{n}} A_\frac{a}{n}x - A_\frac{\lceil a_k\rceil}{n}^{-1} S_\frac{\lceil \tilde{s}\rceil}{n}^{-1}\tilde{t}/n) |^2 < 2\delta/3 + C_\psi  3\delta + \omega  + 3C_\psi\nu.
\end{align*}
We estimate as follows:
\begin{align*}
 &\left| \psi( x ) - \left(\frac{a}{\lceil a_k\rceil}\right)^{\frac{3}{4}}\psi (A_\frac{\lceil a_k\rceil}{n}^{-1} S_{\frac{s-\lceil \tilde{s}\rceil}{n}} A_\frac{a}{n}x - A_\frac{\lceil a_k\rceil}{n}^{-1} S_\frac{\lceil \tilde{s}\rceil}{n}^{-1}\tilde{t}/n) \right|\leq  I_1 + I_2 + I_3 , 
\end{align*}
where
\begin{align*}
 I_1:& = \left| \psi( x ) - \left(\frac{a}{\lceil a_k\rceil}\right)^{\frac{3}{4}}\psi( x ) \right|\\
 I_2:& = \left(\frac{a}{\lceil a_k\rceil}\right)^{\frac{3}{4}} \left| \psi( x )- \psi (A_\frac{\lceil a_k\rceil}{n}^{-1} S_{\frac{s-\lceil \tilde{s}\rceil}{n}} A_\frac{a}{n}x ) \right|\\
 I_3:&= \left(\frac{a}{\lceil a_k\rceil}\right)^{\frac{3}{4}} \left| \psi( A_\frac{\lceil a_k\rceil}{n}^{-1} S_{\frac{s-\lceil \tilde{s}\rceil}{n}} A_\frac{a}{n}x) - \psi (A_\frac{\lceil a_k\rceil}{n}^{-1} S_{\frac{s-\lceil \tilde{s}\rceil}{n}} A_\frac{a}{n}x - A_\frac{\lceil a_k\rceil}{n}^{-1} S_\frac{\lceil \tilde{s}\rceil}{n}^{-1}\tilde{t}/n )\right|. 
\end{align*}
Clearly $I_1 \leq (2^{\frac{3}{4}\delta}-1) \leq 2\delta/3$. The cases $I_2, I_3$ follow by the Lipschitz continuity of $\psi$ by
\begin{align*}
 I_2 &\leq 2^{\frac{3}{4}\delta} C_\psi | x -  A_\frac{\lceil a_k\rceil}{n}^{-1} S_{\frac{s-\lceil \tilde{s}\rceil}{n}} A_\frac{a}{n}x| \text{ and }I_3 &\leq 2^{\frac{3}{4}\delta} C_\psi | A_\frac{\lceil a_k\rceil}{n}^{-1} S_\frac{\lceil \tilde{s}\rceil}{n}^{-1}\tilde{t}/n|.
\end{align*}
First of all
\begin{align*}
 A_\frac{\lceil a_k\rceil}{n}^{-1} S_{\frac{s-\lceil \tilde{s}\rceil}{n}} A_\frac{a}{n} &=  \begin{pmatrix}
                                                                                         \frac{n}{\lceil a_k\rceil} & 0 \\
                                                                                          0& \sqrt{\frac{n}{\lceil a_k\rceil}}\\
                                                                                          \end{pmatrix}
                                                                                          \begin{pmatrix}
                                                                                          1 & \frac{s-\lceil \tilde{s}\rceil}{n} \\
                                                                                          0& 1\\
                                                                                          \end{pmatrix}   
                                                                                          \begin{pmatrix}
                                                                                          \frac{a}{n} & 0 \\
                                                                                          0& \sqrt{\frac{a}{n}}\\
                                                                                          \end{pmatrix}\\
                                                                                          &=
                                                                                          \begin{pmatrix}
                                                                                          \frac{n}{\lceil a_k\rceil} &  \frac{s-\lceil \tilde{s}\rceil}{\lceil a_k\rceil}\\
                                                                                          0& \sqrt{\frac{n}{\lceil a_k\rceil}}\\
                                                                                          \end{pmatrix}
                                                                                          \begin{pmatrix}
                                                                                          \frac{a}{n} & 0 \\
                                                                                          0& \sqrt{\frac{a}{n}}\\
                                                                                          \end{pmatrix}\\
                                                                                          &=
                                                                                          \begin{pmatrix}
                                                                                          \frac{a}{\lceil a_k\rceil} &  \frac{(s-\lceil \tilde{s}\rceil )\sqrt{a}  }{\lceil a_k\rceil \sqrt{n}} \\
                                                                                          0& \sqrt{\frac{a}{\lceil a_k\rceil}}\\
                                                                                          \end{pmatrix}.\\
\end{align*}
This implies that $I_2 \leq 2^{\frac{3}{4}\delta} C_\psi (|x - \frac{a}{\lceil a_k\rceil} x| + \frac{(s-\lceil \tilde{s}\rceil )\sqrt{a}  }{\lceil a_k\rceil \sqrt{n}}|x_2| )$.
Using that $\frac{a}{\lceil a_k\rceil}  = 2^\delta$ and $\frac{(s-\lceil \tilde{s}\rceil )\sqrt{a}  }{\lceil a_k\rceil \sqrt{n}} \leq \omega/2$ we obtain that $I_2 \leq C_\psi 2^{\frac{3}{4}\delta}((2^{\delta}-1) + \omega/2 )\|x\|_2$ and since $\|x\|_2 \leq \sqrt{2}$ we get $I_2 \leq C_\psi  2^{\frac{3}{4}\delta+\frac{1}{2}}((2^{\delta}-1) +\omega/2 ) \leq C_\psi  3\delta + \omega$.

With a similar approach we can estimate $I_3$ by 
\begin{align*}
2^{\frac{3}{4}\delta} C_\psi \begin{pmatrix}
   \frac{n}{\lceil a_k\rceil} & -\frac{\lceil \tilde{s}\rceil}{\lceil a_k\rceil}\\
   0 & \sqrt{\frac{n}{\lceil a_k\rceil}}\\
  \end{pmatrix}\tilde{t}/n &=2^{\frac{3}{4}\delta} C_\psi \begin{pmatrix}
   \frac{1}{\lceil a_k\rceil} & -\frac{\lceil \tilde{s}\rceil}{n \lceil a_k\rceil}\\
   0 & \frac{1}{\sqrt{n \lceil a_k\rceil}}\\
  \end{pmatrix}\tilde{t}\\
  &\leq 2^{\frac{3}{4}\delta} C_\psi\left( \frac{a_k}{\lceil a_k\rceil} \frac{\nu}{2} + \frac{\sqrt{a_k} \lceil \tilde{s}\rceil}{\sqrt{n} \lceil a_k\rceil }\frac{\nu}{2} + \frac{\sqrt{a_k}}{\sqrt{\lceil a_k\rceil }} \frac{\nu}{2}\right) \leq 3C_\psi\nu.
\end{align*}
In conclusion we obtain that 
$$ I_1 + I_2 + I_3 \leq 2\delta/3 + C_\psi  3\delta + \omega  + 3C_\psi\nu$$
which yields the claim.
\end{proof}
Next we can combine Theorem \ref{thm:detectWhatCanBeDetected} and Lemma \ref{lem:ShearletsEpsilonNets} to obtain, that subsampled shearlet systems can detect all detectable features of unsubsampled shearlet systems. 
\begin{theorem}\label{thm:mainTheorem}
 Let $\psi$ be a Lipschitz continuous shearlet and $\mathcal{CDSH}$ the corresponding continuous digital shearlet system. Let $\xi^{(n)}$ be a sequence of signals that have a detectable feature.
 Then, there exists $\delta, \omega, \nu>0$ such that the GLRT with $\mathcal{CDSH}^{\delta, \omega, \nu}$ is asymptotically powerful.
\end{theorem}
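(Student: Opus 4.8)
The plan is to deduce Theorem~\ref{thm:mainTheorem} as a direct application of Theorem~\ref{thm:detectWhatCanBeDetected} to the sequence of systems $(\mathcal{CDSH})_{n\in\N}$ with the candidate subsystems $(\mathcal{CDSH}^{\delta,\omega,\nu})_{n\in\N}$ and exponent $e=2$. The GLRT in question rejects $H_{0,n}$ exactly when $\mathcal{G}^\epsilon(x^{(n)})=\max_{g\in\mathcal{CDSH}^{\delta,\omega,\nu}}|\langle g,x^{(n)}\rangle|\geq t_n$, and Theorem~\ref{thm:detectWhatCanBeDetected} asserts precisely that such a test separates $H_{0,n}$ from the alternative carrying a detectable feature. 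So all the work lies in checking the three hypotheses of Theorem~\ref{thm:detectWhatCanBeDetected}: (i) the elements of $\mathcal{CDSH}$ are $\ell^2$-normalized; (ii) for every $\epsilon>0$ one can pick $\delta,\omega,\nu>0$ so that $\mathcal{CDSH}^{\delta,\omega,\nu}$ is an $\epsilon$-net for $\mathcal{CDSH}$; (iii) the cardinality of $\mathcal{CDSH}^{\delta,\omega,\nu}$ is $O(n^{2+\omega'})$ with the exponent $2$ fixed and $\omega'>0$ arbitrary.

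First I would dispose of (i): the prefactor $n^{5/4}a^{-3/4}$ in the definition of $\psi_{a,s,t}$ is exactly the factor turning the pixel-averaged dilated--sheared generator into a unit vector of $\ell^2(I_n)$. Writing $\psi_{a,s,t}(i)$ as $n$ times the average over the pixel $Q_{1/(2n)}(i/n)$ of the $L^2([0,1]^2)$-normalized continuous shearlet and using that $\sum_{i\in I_n}|\text{average over pixel }i|^2$ is a Riemann sum for the corresponding $L^2$-integral, one obtains $\|\psi_{a,s,t}\|_{\ell^2}\to 1$; if equality does not hold exactly one renormalizes, which affects nothing downstream. For (ii) I would invoke Lemma~\ref{lem:ShearletsEpsilonNets}: given a target $\epsilon>0$ it suffices to take $\delta,\omega,\nu>0$ small enough (in particular $\delta\le 1$) that $2\delta/3+3C_\psi\delta+\omega+3C_\psi\nu<\epsilon$, e.g.\ $\delta=\nu=\omega=\epsilon/(2+6C_\psi)$, and then $\mathcal{CDSH}^{\delta,\omega,\nu}$ is an $\epsilon$-net. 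For (iii) I would simply count: for fixed $k$ the triple $(l,r_1,r_2)$ ranges over $O\big(\tfrac{\sqrt n}{\sqrt{a_k}\,\omega}\cdot\tfrac{n}{a_k\nu}\cdot\tfrac{\sqrt n}{\sqrt{a_k}\,\nu}\big)=O\big(n^2/(a_k^2\omega\nu^2)\big)$ values, and summing the geometric series in $a_k=2^{\delta k}$ over the $O(\tfrac1\delta\log n)$ admissible $k$ yields $|\mathcal{CDSH}^{\delta,\omega,\nu}|=O\big(n^2\log n\big)$, so the exponent $e=2$ is legitimate and independent of $\epsilon$, and the detectable-feature threshold becomes $\sqrt{2\log(n^2)}$.

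With (i)--(iii) in place, I would run Theorem~\ref{thm:detectWhatCanBeDetected}: it supplies an $\epsilon>0$ such that $\mathcal{G}^\epsilon(z)\le\sqrt{2(1+\eta)\log(n^2)}$ with probability tending to $1$ for every $\eta>0$, and, for the gap $\eta$ furnished by the detectable feature, $\mathcal{G}^\epsilon(x^{(n)})\ge\sqrt{2(1+\eta)\log(n^2)}$ with probability tending to $1$; realizing that particular $\epsilon$ through step (ii) fixes admissible $\delta,\omega,\nu$, and the GLRT with threshold $t_n=\sqrt{2(1+\eta/2)\log(n^2)}$ then has both error probabilities tending to $0$, i.e.\ is asymptotically powerful. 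I expect the only genuinely delicate point to be the ordering of quantifiers: the net parameter $\epsilon$ must be chosen after, and as a function of, the detectable-feature gap $\eta$, while $\delta,\omega,\nu$ are themselves functions of $\epsilon$, so one must check that shrinking $\epsilon$ does not change the cardinality exponent $e=2$ — only the implied constant (and the harmless $\log n$ factor) deteriorate. The bookkeeping of the $\ell^2$-normalization in step (i) is the one other place where a little care is required.
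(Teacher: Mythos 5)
Your proposal follows exactly the route the paper intends: the paper offers no explicit proof of Theorem~\ref{thm:mainTheorem}, merely stating that it follows by combining Theorem~\ref{thm:detectWhatCanBeDetected} with Lemma~\ref{lem:ShearletsEpsilonNets}, which is precisely what you carry out (with rather more care about normalization, the cardinality exponent $e=2$, and the order in which $\eta$, $\epsilon$, and $\delta,\omega,\nu$ are chosen). Your write-up is correct and, if anything, supplies details the paper leaves implicit.
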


\subsection{Open Questions}
\noindent
\emph{What can be detected by $\mathcal{CDSH}$?}

\noindent
First of all it is not yet clear, what type of geometrical structures can be detected with the $\mathcal{CDSH}$. Certainly, elements of $\mathcal{CDSH}$ are detectable with the GLRT and a reduced shearlet system $\mathcal{CDSH}^{\delta,\omega, \nu}$, but one would certainly be interested in objects or structures that appear naturally in images. Since the publications \cite{KLCharacterizationOfEdges2009, GLLEdgeAnalysis2009, KP2014AnaSing} show how shearlets can detect jump singularities, an extension to these structures seems very likely.

\medskip
\noindent
\emph{Is the subsampling optimal?}

\noindent
We see that the cardinality of the subsampled shearlet systems $\mathcal{CDSH}^{\delta, \omega, \nu}$ is $O(n^2 \log(n))$. Given that the image has $n^2$ pixels, it seems very likely that, at least up to the $\log$ factor, this is the minimal cardinality for which an $\epsilon$-net can be constructed. A proof for this optimality is still missing.  
A different approach would be to allow further subsampling on the cost of an increasing signal strength. If this is possible, a precise characterization of the trade-off as in \cite{nadlerDetectionOfLines} is desirable.

\medskip \noindent
\emph{Extensions}

\noindent
Similar to the question of what can be detected, we can also adjust the process of testing. For instance if we assume we have an image composed out of only a few elements of $\mathcal{CDSH}$, we can develop a procedure, where we find some promising elements in $\mathcal{CDSH}^{\delta, \omega, \nu}$ and try various combinations of them. A similar method to extract curves from noisy images based on extensions of promising parts of curves has been proposed in \cite{AlpNBDetectingFaintCurves}.
Since shearlets yield optimally sparse approximations of cartoon-like images \cite{KLcmptShearSparse2011}, this might ultimately give rise to a method to optimally detect cartoon-like images embedded in noise.

\small
\bibliographystyle{abbrv} 
\bibliography{references}
\end{document}